\documentclass{sig-alternate}
\usepackage{algorithm}
\usepackage{algorithmic}
\usepackage{comment}
\usepackage{amsmath}   
\usepackage{cite}      
\usepackage{graphicx,psfrag,epsfig,epsf}
\usepackage{colordvi}
\usepackage{subfigure}
\usepackage{algorithm}
\usepackage{algorithmic}
\usepackage{times} 
\usepackage{amsmath} 
\usepackage{amssymb}  
\usepackage{cite}      
\usepackage{psfrag}    
\usepackage{array}
\usepackage{times}
\usepackage{algorithm}
\usepackage{algorithmic}
\usepackage{mathtools}
\usepackage{tikz}

\newtheorem{lemma}{Lemma}
\newtheorem{definition}{Definition}
\newtheorem{theorem}{Theorem}
\newtheorem{proposition}{Proposition}

\newtheorem{assumption}{Assumption}
\newtheorem{remark}{Remark}
\newtheorem{example}{Example}
\newtheorem{rexample}{Running Example Part}

\newcommand{\C}[1]{\mathbb{#1}}

\newcommand{\projection}[1]{\operatorname{lower}({#1})}

\newcommand{\conv}[1]{\operatorname{conv}({#1})}

\newcommand{\permanent}[1]{\operatorname{per}(#1)}

\newcommand{\diagonal}[1]{\operatorname{diag}(#1)}

\newcommand{\cS}{\mathcal{S}}

\newcommand{\cX}{\mathcal{X}}

\newcommand{\cC}{\mathcal{C}}
\newcommand{\cE}{\mathcal{E}}

\newcommand{\cM}{\mathcal{M}}
\newcommand{\cB}{\mathcal{B}}
\newcommand{\cL}{\mathcal{L}}
\newcommand{\cV}{\mathcal{V}}

\newcommand{\cA}{\mathcal{A}}
\newcommand{\cR}{\mathcal{R}}

\newcommand{\cW}{\mathcal{W}}
\newcommand{\cZ}{\mathcal{Z}}
\newcommand{\ok}{\overline{k}}

\newcommand{\cY}{\mathcal{Y}}

\newcommand{\R}{\C{R}}

\newcommand{\N}{\C{N}}


%
\numberwithin{equation}{section}
\newtheorem{fact}{Fact}

\makeatletter
\def\@copyrightspace{\relax}
\makeatother

\begin{document}

\title{Computing the  domain of attraction  of switching systems
subject to non-convex constraints\titlenote{Research supported by the Belgian Interuniversity Attraction
 Poles, and by the ARC grant 13/18-054 from Communaut\'e francaise de Belgique - Actions de Recherche Concert\'ees.}}

\numberofauthors{2}

\author{
\alignauthor
Nikolaos Athanasopoulos \\
       \affaddr{ICTEAM Institute,}\\
       \affaddr{Universit\'{e} Catholique de Louvain}\\
       \affaddr{4 Avenue Georges Lemaitre,}\\
       \email{nikolaos.athanasopoulos@uclouvain.com}
\alignauthor
Rapha{\"e}l M. Jungers\titlenote{R. M. Jungers is a F.R.S.-FNRS Research Associate.} \\
       \affaddr{ICTEAM Institute,}\\
       \affaddr{Universit\'{e} Catholique de Louvain}\\
       \affaddr{4 Avenue Georges Lemaitre,}\\
       \email{raphael.jungers@uclouvain.com}
}

\maketitle

\begin{abstract} We characterize and compute the maximal admissible positively invariant set for asymptotically stable constrained switching linear systems. Motivated by practical problems found, e.g., in obstacle avoidance, power electronics and nonlinear switching systems, in our setting the constraint set is formed by a finite    number of polynomial inequalities. 
First, we observe that the so-called Veronese lifting allows to represent the constraint set as a polyhedral set.
Next, by exploiting the fact that the lifted system dynamics remains linear, we establish a method based on reachability computations to characterize and compute the maximal admissible invariant set, which coincides with
the domain of attraction when the system is asymptotically stable.
After developing the necessary theoretical background, we propose algorithmic procedures for its exact computation, based on linear or semidefinite programs. The approach is illustrated in several numerical examples.
\end{abstract}
\keywords{semi-algebraic constraints, switching linear systems, domain of attraction, maximal admissible invariant set, algorithms}

\section{Introduction}
When a set $\cS\subset\R^n$ is invariant\footnote{Throughout the paper and for simplicity, 	we use the terminology `invariant set' for the concept which is usually referred to as  `positively invariant set' \cite{blanchini:1999}.} with respect to a system, all trajectories starting from $\cS$ remain in it forever.
Since almost every system in practice is subject to some type of constraints on its states or outputs, the notion of invariance becomes extremely relevant in control applications \cite{blanchini:1999}.
Specifically, problems related to safety and viability \cite{Aubin:11} can be addressed by computing sets which possess the invariance property or a variant of it.

For linear switching systems, there are at least two approaches one can follow to compute invariant sets, namely  use dynamic programming or find a Lyapunov function and utilise its sub-level sets \cite{Khalil02}.
The mechanism behind the first approach consists in iteratively computing elements of a convergent set sequence generated from the pre-image map, starting from an appropriately chosen initial set, \cite[Ch. 5]{Blanchini08}, \cite{bertsekas:1972,gutman:87,Gilbert91,ABL:14}. The second approach consists in first characterizing non-conservative families of candidate Lyapunov functions and (hopefully) in developing a computational methodology for solving the corresponding conditions. For linear switching systems, polytopic \cite{MOlchPyat:89}, piecewise quadratic \cite{RantzerANA98,HuLin:2003} and sum of squares (sos) polynomial functions \cite{Prajna03,Jungers:09} have been identified as universal, while efficient algorithmic procedures have been established using linear or semidefinite programming \cite{boyd:ghaoui:feron:balakrishnan:1994,Parrilo:00}.

Apart from few exceptions that include the sub-level sets of min-of-quadratics and sos Lyapunov functions, the available constructions concern invariant sets which are convex. This is not restrictive for the stability analysis problem. Moreover, convex shapes recover the maximal invariant set for systems under polytopic constraints such as in Figure~\ref{Ex_figure1}(a), since the convex hull of any invariant set preserves invariance.
\begin{figure}
\centering     

\subfigure[Polytopic set]{\label{fig:a} %
\includegraphics[width=0.23\textwidth]{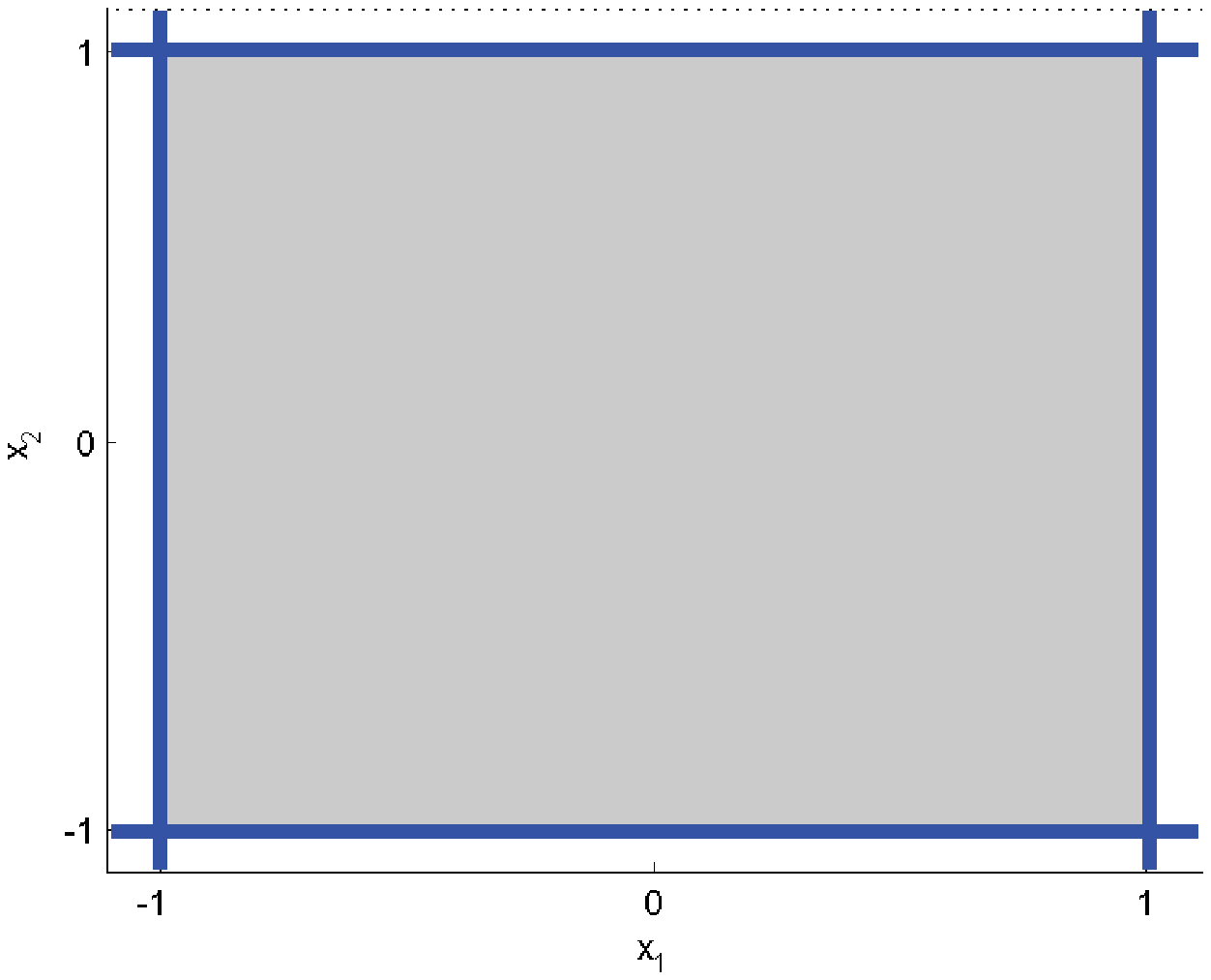}
\psfrag{x2}[][][0.8]{{$\cX$}}
}
\subfigure[Semi-algebraic set]{\label{fig:b}\includegraphics[width=0.23\textwidth]{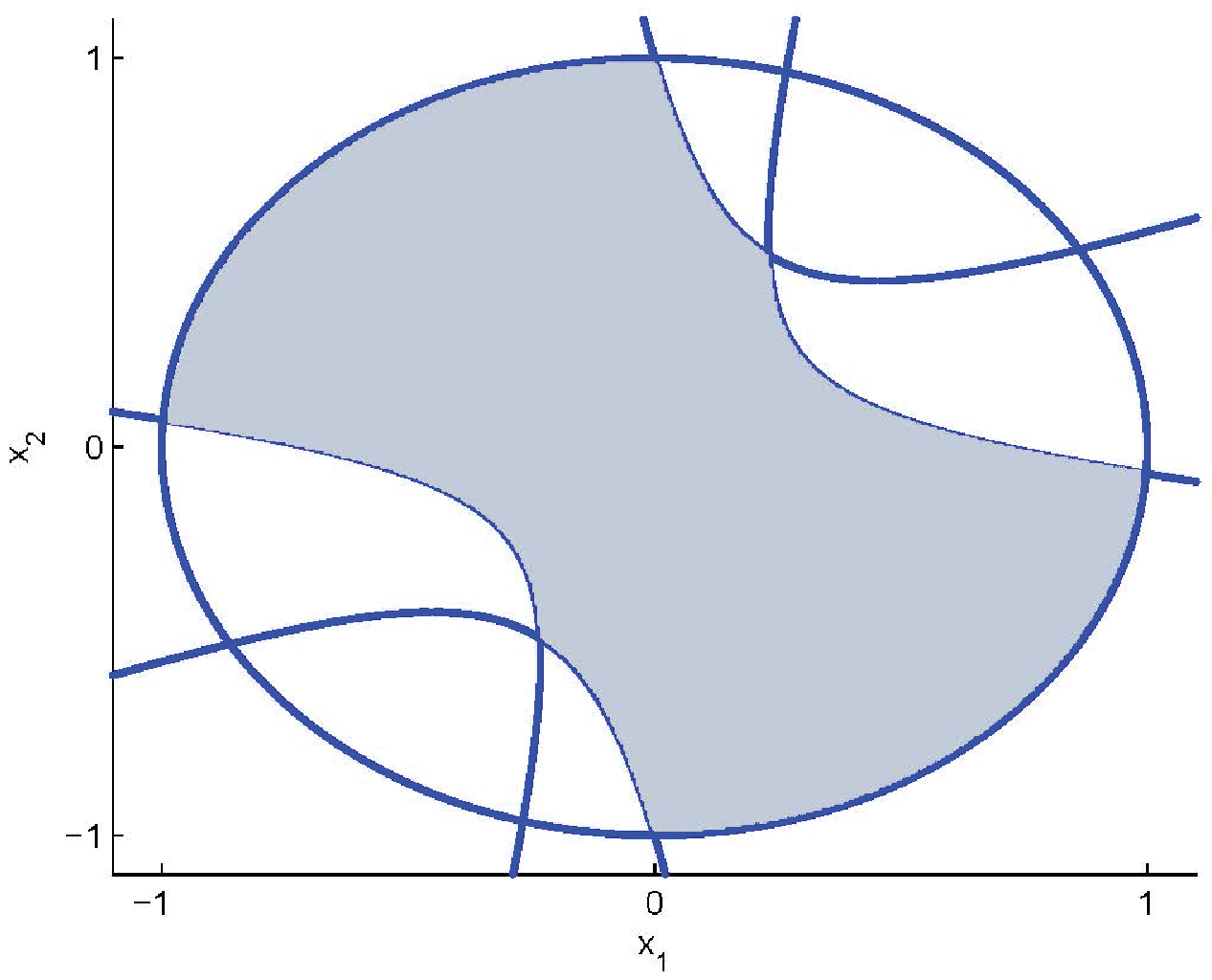}} 
\caption{State constraint sets.}
\label{Ex_figure1}
\end{figure}

Nevertheless, the use of convex invariant sets or Lyapunov functions is restrictive in the setting studied in this paper. Indeed, when the constraint set is semi-algebraic, as for example in Figure~\ref{Ex_figure1}(b), the maximal invariant set does not need to be convex. Furthermore, modifying the standard approaches in order to deal with the non-convex case is not straightforward; it is neither clear  how to  handle non-polytopic sets efficiently in dynamic programming nor how to identify and optimize over families of Lyapunov functions which capture exactly the maximal invariant set.
Additional to the theoretical challenge, the practical motivation for dealing with systems under semi-algebraic constraints  comes from a variety of applications found for example in the path planning and obstacle avoidance  framework \cite{Belta:07}, in power electronics and in non-linear switching systems \cite{AhmadiJungers:13}. 

In this paper we solve both the problems of characterizing the maximal invariant set and of computing it efficiently.
A first helpful observation towards achieving this goal is that semi-algebraic sets are represented by polyhedra in the lifted space induced by the Veronese embedding.
Roughly, the Veronese embedding is a nonlinear mapping of a vector $x\in\R^n$ to a higher dimensional space $\R^N$ defined by the monomials  $x^{\alpha}=[x^{\alpha_1} \quad x^{\alpha_2}  \ldots x^{\alpha_N}]^\top$ that  are of order $d$, where $\alpha_i\in\N^n$ stands for the $n$-tuples that sum up to $d$ and construct each monomial.
This lifting technique has been used with success in the past, see e.g., \cite{zelentsovsky:1994,parrilo:08}, to deal with problems related to stability analysis and approximation of the joint spectral radius of switching systems.


The lifted system enjoys the same stability property with the original system, and more importantly, it remains a switching linear system.
Taking this into account, we are able to establish a relationship between invariant sets in the lifted and original state space.
Additionally, we characterize the maximal invariant set by applying a variant of the \emph{backward reachability algorithm} \cite{Aubin:11,Blanchini08}  in the lifted space.
The corresponding set sequence may be initialized either with the lifted constraint set or with the, possibly unbounded, polyhedral set that is induced from the semi-algebraic constraint set.
We address two specific challenges that arise depending on each choice, namely how to efficiently compute the reachability mapping in the former case and how to guarantee convergence in the latter case.
We show that the maximal admissible invariant set is well-defined, it can be computed in a finite number of steps and it is expressed as the unit sub-level set of a max-polynomial function consisting of a finite number of pieces. To this end, we establish three possible algorithmic implementations for computing the maximal invariant set  based on linear or semidefinite programs. To the best of our knowledge, this is the first time that the exact computation of the domain of attraction under non-convex constraints is possible.

Finally, it is worth to distinguish between the different research objectives set in this work from the ones found in the sos framework, see for example \cite{Papa:05}, where more complex dynamics and constraints are studied. The problem studied there concerns the assessment of local asymptotic stability in the neighborhood of the equilibrium point, however, no guarantee on the level of the approximation of the domain of attraction is sought or provided. Another distinction  should be made with the work in \cite{AhmadiJungers:13}, where the focus is restricted to computing convex invariant approximations of the domain of attraction.

In section~\ref{section2}, the basic definitions and the problem setting are presented, together with the technical details regarding the procedure of lifting the system and the constraint set.
In section~\ref{section3}, we characterize the maximal admissible invariant set by first associating the invariance properties of sets in the lifted and original space and next by applying a modified version of the backward reachability algorithm. The corresponding algorithms are presented in section~\ref{section4}. In section~$5$ two numerical examples are presented, whereas conclusions are drawn in section~$6$. Finally, further details concerning the algorithmic implementation of the results are exposed in the Appendix.

\section{Preliminaries}\label{section2}

\subsection{Notation}\label{subsection2.1}

We denote the field of real
numbers and the set of non-negative integers with $\R $ and $\N$ respectively.
We write vectors $x,y$  with small letters and sets $\cS,\cX,\cV$  with capital letters in italics.
The vector in $\R^n$ with all elements equal to one is denoted by $1_n$. For matrices and vectors, inequalities hold component-wise.
Given a $n$-tuple $\alpha\in\N^{n}$, the $\alpha$ monomial of a vector $x\in\R^n$ is  $x^{\alpha}=x_1^{\alpha_1}\ldots x_n^{\alpha_n}$.
The degree of the monomial is $d=\sum_{i=1}^n\alpha_i$. We denote by $\alpha !$ the multinomial coefficient $\alpha !=\frac{d!}{\alpha_1 ! \ldots \alpha_n !}$.

\subsection{Setting and problem formulation} \label{subsection2.2}
Let $\cA:=\{ A_1,...,A_M  \}\subset\R^{n\times n}$ be a set consisting of $M$ matrices.
The system under study is
\begin{equation}\label{eq_sys}
x(t+1)=A_{\sigma(t)}x(t),
\end{equation}
where $x(0)\in\R^n$, $t\in\N$ and the switching signal $\sigma(\cdot):\N\rightarrow \{1,...,M\}$ assigns at each time instant a matrix from the set $\cA$.
The System \eqref{eq_sys} is subject to state constraints
\begin{equation}\label{eq_constraints}
x(t)\in\cX, \quad t\geq 0.
\end{equation}
 The state constraint set is of the form
\begin{equation}\label{eq_set}
\cX:=\{x\in\R^n: c_i(x)\leq 1, i=1,...,p \},
\end{equation}
where $c_i(\cdot):\R^n\rightarrow \R$, $i=1,..,p$, are polynomials of maximum degree $d\geq 1$.
We are interested in characterizing the domain of attraction for the linear switching System \eqref{eq_sys} subject to constraints \eqref{eq_constraints}.
Throughout the paper, we make the following assumptions.
\begin{assumption}\label{assumption1}
The System \eqref{eq_sys} is asymptotically stable.
\end{assumption}
\begin{assumption}\label{assumption2}
The set $\cX\subset\R^n$ \eqref{eq_set} is closed, bounded and contains the origin in its interior.
\end{assumption}
Assumption~\ref{assumption1} does not affect the generality of the problem since the admissible domain of attraction is different from the singleton set $\{ 0\}$ only if the switching linear System \eqref{eq_sys} is asymptotically stable.
Moreover, under Assumptions~\ref{assumption1} and \ref{assumption2}, the admissible domain of attraction coincides with the maximal admissible invariant set.
The assumption that the origin is in the interior of the constraint set $\cX$ in Assumption~\ref{assumption2} is a technical one, and it is required in the proofs of Theorems~\ref{theorem1}-\ref{theorem3}. It is worth mentioning that this assumption is taken in the standard problem of computing the maximal admissible invariant set for linear switching systems under polytopic constraints \cite{Blanchini08}, while its removal, even when the constraint set is a polyhedron is still being investigated, see e.g., \cite{BitsSorin:13}.

\begin{definition}
A set $\cS\subset\R^n$ is called \emph{invariant} with respect to the System \eqref{eq_sys} if $x(0)\in\cS$ implies $x(t)\in\cS$ for all $t\in\N$ and any switching signal $\sigma(\cdot):\N\rightarrow \{1,...,M\}$.
Moreover, if $\cS\subseteq\cX$, the set $\cS$ is called an \emph{admissible invariant set}   with respect to the System \eqref{eq_sys} and the constraints \eqref{eq_constraints}.
\end{definition}
\begin{definition}\label{def_maximal}
The set $\cM\subset\R^n$ is called the \emph{maximal admissible invariant set} with respect to the System \eqref{eq_sys} and the constraints \eqref{eq_constraints}
if it is admissible invariant, and, moreover, for any admissible invariant set $\cS\subseteq\cX$, it holds that $\cS\subseteq \cM$ .
\end{definition}
Thus, the problem investigated in this paper is naturally formulated as follows:
Suppose that Assumptions~\ref{assumption1} and \ref{assumption2} hold.
Compute the maximal admissible invariant set with respect to the System \eqref{eq_sys} and the state constraints \eqref{eq_constraints}.

\subsection{Lifting the system}
We now describe formally the algebraic lifting applied to System \eqref{eq_sys},
resulting in a dynamical system which enjoys the same stability properties.
The broad idea is to construct monomials of $x$ of a certain maximum degree $d$ and infer properties of our dynamical system from the one obtained after  this state-space transformation.

\begin{definition} {\emph{\cite{parrilo:08}, \cite{Jungers:09}}. }
Given a vector $x\in\R^n$ and an integer $d\geq 1$, the $d$-lift of $x$, denoted by $x^{[d]}$, is the vector in $\R^{n+d-1\choose d}$,  having as elements all the exponents $\alpha$ of degree $d$, i.e,.
\begin{equation*}
x_{\alpha}=\sqrt{\alpha !}x^{\alpha}.
\end{equation*}
\end{definition}
\begin{definition} {\emph{\cite{parrilo:08}, \cite{Jungers:09}}. }
Given $\cA\subset\R^{n\times n}$ and an integer $d\geq 1$, the $d$-lift of the set $\cA$ is $\cA^{[d]}:=\{A_1^{[d]},\ldots A_{M}^{[d]}\}\subset\R^{{n+d-1\choose d}\times{n+d-1\choose d}}$ where each matrix $A_i^{[d]}$, $i=1,...,M$, is associated to the linear map\footnote{ One can obtain a numerical expression of the entries of $A^{[d]}$ with the formula
$A_{\alpha\beta}^{[d]}=\frac{\permanent{A(\alpha,\beta)}}{\sqrt{\mu(\alpha)\mu(\beta)}},$
where  $\mu(\alpha)$ is the product of the factorials of the entries of $\alpha$, the matrix $\overline{A}=A(\alpha,\beta)\in\R^{n\times n}$ has elements  $\overline{a}_{ij}:=a_{\alpha_{i}\beta_{j}}$,
$i\in [1,n]$, $j\in [1,m]$ and
 $\permanent{A}=\sum_{\pi\in S_n}\cdot$ $\prod\limits_{i=1}^n a_{i,\pi(i)}$ is the permanent of a matrix $A\in\R^{n\times n}$, where $S_n$
 is the symmetric group on $n$ elements.} $A^{[d]}_i:x^{[d]}\rightarrow (A_ix)^{[d]}$.
\end{definition}
In what follows, we define a natural extension of the $d$-lift  which is generated by
 stacking the $l$-lifts of a vector, for a set of integers $l$, in a single augmented vector.
To this end, let us consider the ordered set of integers $\cL:=\{ l_1,l_2,...,l_K\}$, $l_i\in[1,d]$, $i\in[1,K]$, where $K\leq d$.
\begin{definition} \label{definition3}
Given an integer $d\geq 1$, the set $\cL:=\{l_1,...,l_K\}$, $ K\leq d$  and a vector $x\in\R^n$,
the $\cL$-lift of $x$, denoted by $x^{[\cL]}\in\R^N$, $N=\sum_{l_i\in\cL} {n+l_i-1 \choose l_1}$ is
\begin{equation*}
x^{[\cL]}:=\left[x^{[l_1]\top} \quad  x^{[l_2]\top} \ldots x^{[l_K]\top}  \right]^\top.
\end{equation*}
Similarly, the $\cL$-lift of the set $\cA$ is $\cA^{[\cL]}:=\{A_{1}^{[\cL]},\cdots, A_{M}^{[\cL]} \}\subset\R^{N\times N}$, where
\begin{equation*}
A_{i}^{[\cL]}:=\diagonal{A_i^{[l_1]},\ldots, A_{i}^{[l_K]}}, \quad i=1,...,M.
\end{equation*}
\end{definition}
We define the $\cL$-lifted system
\begin{equation}\label{eq_sysd!}
y(t+1)=A_{\sigma(t)}^{[\cL]}y(t),
\end{equation}
where $y_0\in\R^{N}$, $N=\sum_{i=1}^K{n+l_i-1 \choose l_i}$, $t\in\N$ and $\sigma(\cdot):\N\rightarrow \{1,...,M\}$  is the switching signal.
System \eqref{eq_sysd!} can simply be considered to be generated by stacking the $[l_i]$-lifts of \eqref{eq_sys} for all $i\in [1,K]$.
The properties below follow from the definition of a $d$-lift.
\begin{fact} \label{fact1}
Consider an integer $d\geq 1$, the ordered set of integers $\cL=\{l_1,...,l_K\}$, $l_i\in [1,d]$, $K\leq d$ and a matrix $A\in\R^{n\times n}$. Then, for any $x\in\R^n$, it holds that
\begin{align*}
(Ax)^{[d]} & = A^{[d]}x^{[d]}, \\
(Ax)^{[\cL]} & = A^{[\cL]}x^{[\cL]}. 
\end{align*}
\end{fact}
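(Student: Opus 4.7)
The first identity is essentially a restatement of how $A^{[d]}$ is defined in the paper: the matrix $A^{[d]}$ is declared to be the one representing the linear map $x^{[d]} \mapsto (Ax)^{[d]}$. So the real content of the first claim is that such a linear map is well-defined, i.e.\ that the components of $(Ax)^{[d]}$ depend linearly on the components of $x^{[d]}$, and that the explicit entries given in the footnote are the correct ones. My plan is to verify this by a direct multinomial expansion.

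Concretely, I would fix an exponent $\alpha$ with $|\alpha|=d$ and expand
\[
(Ax)^\alpha \;=\; \prod_{i=1}^{n}\Bigl(\sum_{j=1}^{n} a_{ij}x_j\Bigr)^{\alpha_i}
\]
using the multinomial theorem. Grouping the result by monomials $x^\beta$ (with $|\beta|=d$), the coefficient of $x^\beta$ is a sum over all ways of distributing the $\alpha_i$ linear factors among the variables $x_j$ so that the total power of $x_j$ equals $\beta_j$; a standard combinatorial identification shows this coefficient equals $\permanent{A(\alpha,\beta)}/\mu(\beta)$, where $A(\alpha,\beta)$ is the matrix built from rows/columns repeated with the multiplicities dictated by $\alpha$ and $\beta$, exactly as in the footnote. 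Multiplying both sides by $\sqrt{\alpha!}$ and re-expressing $x^\beta = x_\beta/\sqrt{\beta!}$ produces
\[
\bigl((Ax)^{[d]}\bigr)_\alpha \;=\; \sum_{|\beta|=d} \frac{\permanent{A(\alpha,\beta)}}{\sqrt{\mu(\alpha)\mu(\beta)}}\, x_\beta \;=\; \bigl(A^{[d]}x^{[d]}\bigr)_\alpha,
\]
which is the first equation. The scaling $\sqrt{\alpha!}$ in the definition of $x^{[d]}$ is precisely what symmetrizes the formula and guarantees linearity.

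The second identity then follows trivially from the first together with the block structure supplied by Definition~\ref{definition3}. Since $A^{[\cL]}=\diagonal{A^{[l_1]},\ldots,A^{[l_K]}}$ and $x^{[\cL]}$ is the vertical stacking of the $x^{[l_i]}$, the product $A^{[\cL]}x^{[\cL]}$ is the stacking of $A^{[l_i]}x^{[l_i]}$; applying the first identity to each block with $d=l_i$ yields the stacking of $(Ax)^{[l_i]}$, which is by definition $(Ax)^{[\cL]}$.

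The only delicate step is the combinatorial identification of the coefficient with a permanent, together with the correct normalization factors $\sqrt{\alpha!},\sqrt{\beta!}$; everything else is either bookkeeping or follows from a block-diagonal multiplication. Since the claim is presented as a fact rather than a theorem and is standard in the Veronese/symmetric-tensor literature (cf.\ \cite{parrilo:08,Jungers:09}), one may alternatively cite these references and present only the block-diagonal reduction for the second part.
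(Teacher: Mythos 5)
Your proposal is correct, and it matches the paper's (implicit) justification: the paper offers no proof of Fact~\ref{fact1}, stating only that the properties ``follow from the definition of a $d$-lift,'' which is precisely the definitional observation you make for the first identity (with the multinomial/permanent computation correctly verifying that the map $x^{[d]}\mapsto (Ax)^{[d]}$ is linear and that the normalizations $\sqrt{\alpha!}$, $\sqrt{\mu(\alpha)\mu(\beta)}$ are consistent), and the block-diagonal stacking argument for the second. Your write-up simply supplies the details the paper leaves to the cited references \cite{parrilo:08,Jungers:09}.
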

We make use of the following notion, which formalizes the stability notion for a linear switching system.
\begin{definition}\emph{\cite{RotaStrang:60}, \cite{Jungers:09}.}
The joint spectral radius of a matrix set $\cA\subset\R^{n\times n}$ is equal to
\begin{equation}\label{eq_jsr}
\rho(\cA):=\lim_{t\rightarrow \infty}\max\{\| A\|^{\frac{1}{t}}: A\in\cA^{t}  \}.
\end{equation}
\end{definition}
The switching System \eqref{eq_sys} is asymptotically stable if and only if $\rho(\cA)<1$ \cite{Jungers:09}.
\begin{proposition}\label{proposition1}
The System \eqref{eq_sys} is globally absolutely exponentially stable (GAES) if and only if the System \eqref{eq_sysd!} is globally absolutely exponentially stable.
\end{proposition}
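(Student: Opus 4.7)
The plan is to reduce GAES of each system to the condition $\rho(\cdot) < 1$ on the corresponding matrix set, and then to show directly that $\rho(\cA^{[\cL]}) < 1$ iff $\rho(\cA) < 1$. By the joint-spectral-radius characterization of stability for linear switching systems (stated immediately before the proposition), System~\eqref{eq_sys} is GAES iff $\rho(\cA) < 1$, and similarly System~\eqref{eq_sysd!} is GAES iff $\rho(\cA^{[\cL]}) < 1$.

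The first step is to exploit the block structure of the $\cL$-lift. By Definition~\ref{definition3}, every matrix $A_i^{[\cL]}$ is block-diagonal with diagonal blocks $A_i^{[l_1]}, \ldots, A_i^{[l_K]}$, and products of such matrices remain block-diagonal with blockwise products. Picking any submultiplicative operator norm compatible with the block structure (so that the norm of a block-diagonal matrix equals the maximum of the norms of its blocks), the definition \eqref{eq_jsr} of the JSR then yields
\begin{equation*}
\rho(\cA^{[\cL]}) \;=\; \max_{j=1,\ldots,K}\;\rho(\cA^{[l_j]}).
\end{equation*}

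The second step is to invoke the classical identity $\rho(\cA^{[l]}) = \rho(\cA)^l$ for each integer $l \geq 1$, proven in \cite{parrilo:08,Jungers:09}. Its underlying mechanism is Fact~\ref{fact1}: iterating $(Ax)^{[l]} = A^{[l]} x^{[l]}$ along a matrix product yields $(A_{i_t}\cdots A_{i_1})^{[l]} = A_{i_t}^{[l]} \cdots A_{i_1}^{[l]}$; combined with the fact that $x \mapsto x^{[l]}$ is homogeneous of degree $l$ and injective, so that $\|x^{[l]}\|$ is equivalent to $\|x\|^l$ on $\R^n$, this relates the lifted operator norm to the $l$-th power of the original, from which the JSR identity follows by taking $t$-th roots and the limit in~\eqref{eq_jsr}.

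Combining the two steps yields $\rho(\cA^{[\cL]}) = \max_j \rho(\cA)^{l_j}$, and because every $l_j \geq 1$, this maximum is strictly less than one iff $\rho(\cA) < 1$. The main conceptual obstacle is the identity $\rho(\cA^{[l]}) = \rho(\cA)^l$, which I would cite rather than re-derive; beyond that, the argument is a short bookkeeping exercise on block-diagonal products together with a trivial inequality in the $l_j$'s.
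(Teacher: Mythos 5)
Your proposal is correct and follows essentially the same route as the paper's proof: both reduce GAES to $\rho(\cdot)<1$, use the block-diagonal structure of $\cA^{[\cL]}$ to get $\rho(\cA^{[\cL]})=\max_j\rho(\cA^{[l_j]})$, and invoke the cited identity $\rho(\cA^{[l]})=\rho(\cA)^l$ to conclude. The only cosmetic difference is that the paper explicitly routes the GAES equivalence through the asymptotic-versus-exponential stability result for homogeneous systems, which you fold into the JSR characterization directly.
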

\begin{proof}
For any $j\geq 1$, it holds that $\rho(\cA)^j=\rho(\cA^{[j]})$  \cite{BloNes:05}. Moreover, since the matrices $\cA^{[l_i]}$ are block diagonal, $i\in[1,K]$, it holds \cite{Jungers:09}
\begin{equation*}
\rho(\cA^{[\cL]})=\max_{i\in [1,K]} \{ \rho(\cA^{[l_i]})   \}=\max_{i\in [1,K]}\{\rho^{l_i}(\cA)\}.
\end{equation*}
Consequently,  $\rho(\cA)< 1$ if and only if $\rho(\cA^{[\cL]})< 1$.
We finish the proof by recalling the equivalence between asymptotic and exponential stability for homogeneous systems, see e.g., \cite[Corollary V.3]{MDA:13}, of which switching linear systems are a subclass,
and that the switching System \eqref{eq_sys} is  GAES if and only if $\rho(\cA)<1$~\cite{Jungers:09}.
\end{proof}
\begin{rexample}
 Let us consider a two-dimensional system  \eqref{eq_sys} consisting of two modes, i.e., $\cA:=\{A_1,A_2\}$,
with $A_1=\begin{bmatrix}  1.0425 & 0.3416 \\ -0.5893  &  0.5839\end{bmatrix}$, $A_2=\begin{bmatrix}  0 & 0.6500 \\ 0.6500  &  0\end{bmatrix}$.
Let $\cL=\{2\}$. Following Definition~\ref{definition3}, the $\cL$-lift of $x$ is
$$x^{[\cL]}=x^{[2]}=[x_2^2\quad \sqrt{2}x_2x_1\quad x_1^2]^\top,$$
while $\cA^{[\cL]}=\{A_1^{[2]},A_2^{[2]}\}$, with (rounded up to the second digit)
$$A_1^{[2]}=\begin{bmatrix}  0.34 & -0.49 & 0.35 \\ 0.28  &  0.40 & -0.87 \\
0.12 & 0.50 & 1.09\end{bmatrix}, A_2^{[2]}=\begin{bmatrix}  0 & 0 & 0.42 \\ 0 & 0.42 & 0 \\ 0.42 & 0 & 0  \end{bmatrix}.$$
Using the JSR Toolbox \cite{Raphael:14}, we calculate the joint spectral radius of the matrix set $\cA$ to be to $0.9$ with accuracy $9\cdot 10^{-8}$, thus the system \eqref{eq_sys} is asymptotically stable.
As expected from Proposition~\ref{proposition1}, the joint spectral radius of the set $\cA^{[2]}$ is found equal to $0.81$ with accuracy $7.64\cdot 10^{-7}$, thus the system \eqref{eq_sysd!} is also asymptotically stable.
\end{rexample}

\subsection{Lifting the constraints}
We consider the set $\cX$ \eqref{eq_set} and denote with
 $\cL_i\subseteq [1,d]^{K_i} $, $i\in [1,p]$, $K_i\leq d$ the index sets that correspond to the degrees of all monomials appearing in each function $c_i(x)$.
Also, we let $\cL\subseteq [1,d]^d$ contain all the elements of the index sets $\cL_i$, $i=1,...,p$.
We can write each polynomial function $c_i(x)$, $i\in[1,p]$, as a sum of positively homogeneous polynomials $c_{i,l}(x)$ of degree $l\in\cL$, i.e.,
\begin{equation*}
c_{i}(x)=\sum_{l\in\cL_i}c_{i,l}(x).
\end{equation*}
In addition, we
can express  each homogeneous polynomial $c_{i,l}(x)$, $i\in [1,p]$, $l\in\cL_i$, as a linear function of the $\cL$-lifted vectors $x^{[j]}$, $j\in\cL$ as follows
\begin{equation} \label{eq_seteq}
c_i(x):=\sum_{l \in \cL_i  }^dg_{i,l}^\top x^{[l]}=g_{i}^\top x^{[\cL]}, \quad i\in[1,p],
\end{equation}
where $g_{i,l}^\top x^{[l]}:=c_{i,l}(x)$, $l\in\cL_i$. Also,
we have that $g_{i}:=\left[g_{i,l_1}^\top \quad \ldots \quad g_{i,l_K}^\top	 \right]^\top, $ $g_i\in\R^{N}$, $i=1,..,p$, where
\begin{equation}\label{eq_N}
N:=\sum_{l\in\cL}{n+l-1\choose l}.
\end{equation}
We are in a position to define the $\cL$-lift of a set $\cS\subset\R^n$.
\begin{definition}
Consider the set $\cX\subset\R^n$ \eqref{eq_set} that satisfies Assumption~\ref{assumption2}. Let $\cL\subset [1,d]^K$, be the ordered set of integers containing the degrees of  all monomials appearing in $c_i(x)$, $i\in[1,p]$, and $g_{i,l}$, $i\in[1,p]$, $l\in\cL$ be vectors satisfying \eqref{eq_seteq}. We define the $\cL$-lift of the set $\cX$ as $\cX^{[\cL]}\subset\R^N$, where $N$ is given in \eqref{eq_N}, as
\begin{equation} \label{eq_set_lifted}
\cX^{[\cL]}:=\left\{  y \in\R^N:  g_i^{\top}y\leq 1, i=1,...,p \right\}.
\end{equation}
\end{definition}

Moreover, we define the manifold $\cV\subset\R^N$ which is an algebraic variety,
\begin{equation}\label{eq_manifold}
\cV:=\left\{y\in\R^{N}: \left(\exists x\in\R^n: y=x^{[\cL]}\right) \right\}.
\end{equation}
Taking into account Fact~\ref{fact1}, we can show that the set $\cV$ \eqref{eq_manifold} is invariant with respect to the lifted System~\eqref{eq_sysd!}.

\begin{rexample}
Let us consider as constraint set $\cX$ \eqref{eq_set} the set depicted in Figure~\ref{Ex_figure1}(b).
For this case, the polynomials $c_i(x)$, $i=1,2,3$ that define the set are
\begin{align*}
c_1(x) & = x_1^2+x_2^2, \\
c_2(x) & = x_2^2+6\sqrt{2}x_1x_2-4x_1^2, \\
c_3(x) & = -3x_2^2+10\sqrt{2}x_1x_2+2x_1^2.
\end{align*}
We have $\cL=\cL_1=\cL_2=\cL_3=\{2\}$, and consequently, $\cX^{[\cL]}\in\R^3$ is
given by \eqref{eq_set_lifted}, with $g_1=[1\quad 0 \quad 1]^\top$,
$g_2=[1 \quad 6 \quad -4]^\top$, $g_3=[-3 \quad 10\quad 2]^\top$.
 The set $\cX^{[2]}$ is an unbounded polyhedron and its defining hyperplanes are depicted in Figure~\ref{Ex_figure2} in red.
The set $\cV\cap\cX^{[2]}$ is also shown in Figure~\ref{Ex_figure2} in grey.
\begin{figure}[h]
\begin{center}
\includegraphics[height=4.2cm]{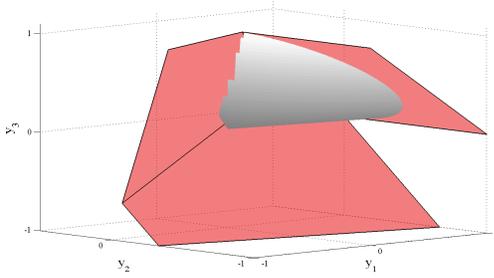}
\end{center}
\caption{The lifted semi-algebraic set $\cX^{[2]}\cap\cV$ of Figure~\ref{Ex_figure1}(b) is depicted in grey. The constraint set is bounded tightly by the polyhedron $\cX^{[2]}$, defined
by the intersection of three half-spaces and shown in red.}
\label{Ex_figure2}
\end{figure}
\end{rexample}

\section{Characterization of the maximal admissible invariant set} \label{section3}

The set $\cX$ \eqref{eq_set} is invariant with respect to the System \eqref{eq_sys} if and only if
\begin{align*}
&  \forall x\in\R^n,\forall j\in[1,M], \\
&(\forall i\in [1,p], c_i(x)\leq 1)  \Rightarrow (\forall i\in[1,p], c_i(A_jx)\leq 1).
\end{align*}
If $c_i(\cdot)$, $i\in [1,p]$, are linear functions, it is well known that invariance can be verified by solving a linear program \cite{Bitsoris:88}.
If the functions $c_i(x)$, $i\in[1,p]$, are positive definite quadratic functions, then invariance can be verified by solving a convex quadratic program \cite{LinAnts:09}.
In comparison, in this paper we aim to find a way to verify and compute invariant sets when the functions $c_i(\cdot)$, $i\in[1,p]$, are general polynomial functions.

In what follows we show that the projection of an admissible invariant set $\cS\subseteq\cX^{[\cL]}$ of the $\cL$-lifted system on $\R^n$ is invariant for the system under study.
To this end, we define the ``reverse'' operation of lifting.
\begin{definition}
Given an index set $\cL\subseteq [1,d]^d$,  and a set in the $\cL$-lifted space $\cS\subseteq \R^{N}$, $N=\sum_{l\in\cL}{n+l-1\choose l}$, the lowering operation of $\cS$ to $\R^n$  is
\begin{equation*}
\projection{\cS}:=\left\{x\in\R^n: \left(\exists y\in\cS: y=x^{[\cL]}\right) \right\}.
\end{equation*}
\end{definition}
Taking into account \eqref{eq_manifold}, it is not difficult to see that the relation
$$\projection{\cS}=\projection{\cS\cap\cV}$$
holds for any set $\cS\subset\R^{N}$. 

\begin{proposition}\label{lemma1}
Consider the System \eqref{eq_sys} and the constraint set \eqref{eq_set}. If $\cS\subseteq\cX^{[\cL]}\subset\R^{N}$,
\begin{equation} \label{eq_lem1_1}
\cS:=\{ y\in\R^{N}: f_{i}^\top y\leq 1, i=1,...,q  \},
\end{equation}
where  $N=\sum_{l\in\cL}{n+l-1\choose l}$, $f_i\in\R^N$, $i\in[1,q]$,
is an admissible invariant set with respect to System \eqref{eq_sysd!} and the constraint set $\cX^{[\cL]}$ \eqref{eq_set_lifted}
then the set $\projection{\cS}$
 is an admissible invariant set with respect to System \eqref{eq_sys} and the constraint set~\eqref{eq_set}.
\end{proposition}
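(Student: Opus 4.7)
The plan is to show two things about $\projection{\cS}$: it is contained in $\cX$ (admissibility), and it is invariant under the original dynamics \eqref{eq_sys}. Both will be derived from the lifted invariance of $\cS$ via the commutation property of Fact~\ref{fact1}, together with the identity $c_i(x)=g_i^\top x^{[\cL]}$ from \eqref{eq_seteq}.

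First I would verify admissibility. Pick any $x\in\projection{\cS}$. By the definition of the lowering operation, there exists $y\in\cS$ with $y=x^{[\cL]}$. Since $\cS\subseteq\cX^{[\cL]}$, the defining inequalities of $\cX^{[\cL]}$ give $g_i^\top y\leq 1$ for $i=1,\dots,p$. Using \eqref{eq_seteq}, this reads $c_i(x)=g_i^\top x^{[\cL]}=g_i^\top y\leq 1$, so $x\in\cX$. Hence $\projection{\cS}\subseteq\cX$.

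Next I would prove invariance. Fix $x(0)\in\projection{\cS}$ and an arbitrary switching signal $\sigma(\cdot)$, and let $x(t)$ denote the resulting trajectory of \eqref{eq_sys}. Choose $y(0)\in\cS$ with $y(0)=x(0)^{[\cL]}$ and let $y(t)$ be the trajectory of the lifted system \eqref{eq_sysd!} driven by the same signal. The crux of the argument is the claim that
\begin{equation*}
y(t)=x(t)^{[\cL]} \quad \text{for all } t\in\N,
\end{equation*}
which I would prove by induction on $t$: the base case $t=0$ holds by construction, and the inductive step uses Fact~\ref{fact1} directly, since
\begin{equation*}
y(t+1)=A_{\sigma(t)}^{[\cL]}y(t)=A_{\sigma(t)}^{[\cL]}x(t)^{[\cL]}=(A_{\sigma(t)}x(t))^{[\cL]}=x(t+1)^{[\cL]}.
\end{equation*}
Because $\cS$ is invariant for \eqref{eq_sysd!}, we have $y(t)\in\cS$ for all $t$, and the identity $y(t)=x(t)^{[\cL]}$ then yields $x(t)\in\projection{\cS}$ by definition of the lowering. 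Since the switching signal and the initial point were arbitrary, $\projection{\cS}$ is invariant for \eqref{eq_sys}; combined with $\projection{\cS}\subseteq\cX$, this gives admissible invariance.

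There is no real obstacle here beyond bookkeeping: the argument is a one-line application of Fact~\ref{fact1} to transport the lifted invariance back to $\R^n$. The only subtlety worth emphasizing is that the lifted trajectory $y(t)$ never leaves the algebraic variety $\cV$ of \eqref{eq_manifold}, so the lowering is always well defined along the trajectory; this is exactly the content of the relation $\projection{\cS}=\projection{\cS\cap\cV}$ noted right before the statement.
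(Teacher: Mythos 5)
Your proposal is correct and follows essentially the same route as the paper: both reduce everything to the commutation identity $(A_jx)^{[\cL]}=A_j^{[\cL]}x^{[\cL]}$ of Fact~\ref{fact1} together with the definition of the lowering operation, the only cosmetic difference being that the paper phrases invariance as a one-step implication between the inequality systems $f_i^\top y\le 1$ and $f_i^\top A_j^{[\cL]}y\le 1$, while you run the equivalent trajectory-wise induction. No gaps.
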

\begin{proof}
Since $\cS\subseteq\cX^{[\cL]}$, it follows that $\cS\cap\cV\subseteq\cX^{[\cL]}\cap\cV$, and consequently,
 $\projection{\cS\cap\cV}\subseteq \projection{\cX^{[\cL]}\cap\cV}=\cX$.
Next, we show that $\projection{\cS}$ is invariant. By construction, $\projection{\cS}=\{x\in\R^n: b_i(x)\leq 1 \}$, where $b_i(x):=f^\top_i x^{[\cL]}$, $i\in[1,q]$.
From hypothesis, for all $y\in\cS$, relation $f_i^{\top}y\leq 1$, $i\in[1,q]$, implies $f_i^\top A^{[\cL]}_jy\leq 1$, for all $j\in[1,M]$.
By definition, for any $x\in\projection{\cS}\subseteq\cX$, there exists a vector $y\in\cS\cap\cV$ such that $y:=x^{[\cL]}$.
Thus, we have
\begin{align*}
f^{\top}_iA^{[\cL]}_jy & =f^{\top}_iA^{[\cL]}_jx^{[\cL]}=f^{\top}_i(A_jx)^{[\cL]}=b_i(A_jx)\leq 1.
\end{align*}
Consequently, $b_i(x)\leq 1$ for all $i\in [1,p]$ implies $b_i(A_jx)\leq 1$ for all $i\in [1,q]$, for all $j\in[1,M]$, and the set $\projection{\cS}$ is admissible invariant with respect to the System \eqref{eq_sys}.
\end{proof}

\begin{remark}\label{rem_nec_and_suf}
It is worth underlining that the statement of Proposition~\ref{lemma1} becomes both necessary and sufficient when $\cS\subseteq\cX^{[\cL]}$ is any set lying on  $\cV$, i.e., when $\cS\cap\cV=\cS$.
\end{remark}

\begin{remark} \label{remark1.5}
The lowering operation is straightforward when $\cS$ is a polyhedron \eqref{eq_lem1_1}, since in this case $\projection{\cS}=\{x\in\R^n: c_i(x)\leq 1, i=1,...,q  \}$, where $c_i(x)=f_i^\top x^{[\cL]}$, $i\in[1,q]$.
\end{remark}

Proposition~\ref{lemma1} suggests that in order to compute invariant sets for the original system and constraint set \eqref{eq_set},
one can first compute admissible invariant sets with respect to the $\cL$-lifted System \eqref{eq_sysd!} and the $\cL$-lifted constraint set \eqref{eq_set_lifted} and consequently perform a projection on the original space.
This observation provides a potential advantage. Indeed, since the System \eqref{eq_sysd!} is a switching linear system and $\cX^{[\cL]}$ \eqref{eq_set_lifted} is a polyhedral set,
one can apply established results for checking invariance of a given polyhedral set.

\begin{proposition}\label{lemma2}
Consider the System \eqref{eq_sys} and the set $\cX$ defined in \eqref{eq_set}. Let $G\in\R^{p\times N}$ be the matrix having as rows the vectors $g_{i}^\top$, $i\in[1,p]$ that describe the set $\cX^{[\cL]}$, defined in  \eqref{eq_set_lifted}. 
Then, the set $\cX$ is invariant with respect to \eqref{eq_sys} if there exist non negative matrices $H_i\in\R^{p\times p}$, $i\in[1,M]$, that satisfy the relations
\begin{align}
GA_{i}^{[\cL]} & =H_iG, \quad i\in[1,M]		\label{eq_lem2_1}, \\
H_i1_p & \leq 1_p \quad   i\in [1,M] \label{eq_lem2_2},\\
H_{i} & \geq 0, \quad i\in [1,M] \label{eq_lem2_3}.
\end{align}
\end{proposition}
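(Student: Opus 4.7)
The plan is to reduce the invariance of the semi-algebraic set $\cX$ to a standard polyhedral invariance certificate applied in the lifted space, using Fact~\ref{fact1} to transport the claim back to the original coordinates. In essence, conditions \eqref{eq_lem2_1}--\eqref{eq_lem2_3} are the Farkas/Bitsoris-type sufficient conditions for $\cX^{[\cL]}$ to be forward-invariant under each $A_i^{[\cL]}$, and I would simply restrict that invariance to the algebraic variety $\cV$ defined in~\eqref{eq_manifold}.

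\emph{Step 1.} Take any $x\in\cX$ and any mode index $j\in[1,M]$. Using the lifted representation \eqref{eq_seteq} of the defining polynomials, $x\in\cX$ is equivalent to $G x^{[\cL]}\leq 1_p$, where $G$ stacks the rows $g_i^\top$.

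\emph{Step 2.} By Fact~\ref{fact1}, $(A_j x)^{[\cL]} = A_j^{[\cL]} x^{[\cL]}$, so the vector whose polynomial constraints I need to check is $G(A_j x)^{[\cL]} = G A_j^{[\cL]} x^{[\cL]}$. Substituting \eqref{eq_lem2_1}, this equals $H_j G x^{[\cL]}$.

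\emph{Step 3.} From $G x^{[\cL]}\leq 1_p$ together with $H_j\geq 0$ (condition \eqref{eq_lem2_3}), I can multiply the inequality on the left, preserving direction, to obtain $H_j G x^{[\cL]} \leq H_j 1_p$. Chaining this with \eqref{eq_lem2_2} gives $G(A_j x)^{[\cL]} \leq H_j 1_p \leq 1_p$, which means $c_i(A_j x)\leq 1$ for every $i\in[1,p]$, i.e., $A_j x\in\cX$. Since $j$ and $x$ were arbitrary, $\cX$ is invariant.

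I do not expect any real obstacle. The only subtle point, and the reason the condition is only sufficient rather than necessary, is that \eqref{eq_lem2_1} is required to hold as an identity between matrices acting on all of $\R^N$, whereas for invariance of $\cX$ itself it would be enough to enforce it on the variety $\cV$ where the lifted states actually live; this mirrors Remark~\ref{rem_nec_and_suf}. I would flag this briefly after the proof but not belabor it, since the proposition only claims sufficiency.
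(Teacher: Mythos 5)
Your proof is correct. It is worth noting, though, that you take a more self-contained route than the paper. The paper's proof is a two-step composition: it first invokes the cited Bitsoris/Hennet result that conditions \eqref{eq_lem2_1}--\eqref{eq_lem2_3} are necessary and sufficient for the \emph{polyhedron} $\cX^{[\cL]}$ to be invariant under the lifted dynamics \eqref{eq_sysd!} in all of $\R^N$, and then applies Proposition~\ref{lemma1} to lower that invariant set back to $\cX=\projection{\cX^{[\cL]}}$. You instead unroll the underlying Farkas-type computation directly on lifted points of the form $x^{[\cL]}$: the chain $G(A_jx)^{[\cL]}=GA_j^{[\cL]}x^{[\cL]}=H_jGx^{[\cL]}\leq H_j 1_p\leq 1_p$ never asserts invariance of the full polyhedron $\cX^{[\cL]}$ and bypasses both the external citation and Proposition~\ref{lemma1}. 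What the paper's route buys is modularity (the same projection lemma is reused throughout, and the polyhedral certificate is quoted with its converse, which motivates the LP feasibility test in the Running Example); what your route buys is a shorter, elementary argument confined to the variety $\cV$, which also makes the gap between sufficiency and necessity transparent, exactly as you flag at the end in the spirit of Remark~\ref{rem_nec_and_suf}.
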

\begin{proof}
Conditions \eqref{eq_lem2_1}-\eqref{eq_lem2_3}  are necessary and sufficient for the set $\cX^{[\cL]}$ to be invariant with respect to the System \eqref{eq_sysd!}
\cite{Bitsoris:88}, \cite{Hennet:95}. 
Consequently, from Proposition~\ref{lemma1}, the set $\cX=\projection{\cX^{[\cL]}}$ is invariant with respect to the System \eqref{eq_sys}.
\end{proof}
The algebraic relations \eqref{eq_lem2_1}-\eqref{eq_lem2_3}  can be solved by linear programming. However, although these conditions are necessary and sufficient for  a polyhedral set $\cX^{[\cL]}$ to be invariant with respect to the lifted System \eqref{eq_sysd!}, they are only sufficient for $\cX$ to be invariant w.r.t. the original System \eqref{eq_sys}.
Additionally, since it is impossible to define a polyhedron $\cX^{[\cL]}$ lying on the manifold $\cV$, we cannot exploit Remark~\ref{rem_nec_and_suf} to
pose necessary and sufficient conditions of invariance for $\cX$ w.r.t. \eqref{eq_sys} via $\cX^{[\cL]}$.
Also, apart from the above observations, it might happen that the set $\cX$ is not invariant and consequently the maximal admissible invariant set is a subset of $\cX$. Thus,
exploiting Proposition~\ref{lemma2} to characterize an invariant set is limited.

\begin{rexample}
Let us consider the lifted system and the set $\cX^{[\cL]}$ calculated in the previous parts of the Running Example. In order to verify if $\cX^{[\cL]}$ is an invariant set we utilise Proposition~\ref{lemma2}. To this end, by setting $$G=\begin{bmatrix}
1 & 0 & 1 \\ 1 & 6 & -4 \\ -3 & 10 & 2 \end{bmatrix},$$ constructed from the vectors $g_{i}$, $i=1,2,3,$ that define the set $\cX^{[\cL]}$, we solve the  optimization problem
$$\min_{\varepsilon,H_1,H_2}{\varepsilon}$$
subject to  \eqref{eq_lem2_1},\eqref{eq_lem2_3} and inequalities $H_1 1_3\leq  \varepsilon 1_3$, $H_21_3\leq \varepsilon 1_3$, The optimization problem is infeasible, thus, the set $\cX^{[\cL]}$ is not invariant with respect to \eqref{eq_sysd!}, and consequently, we cannot decide if $\cX$ is invariant with respect to \eqref{eq_sys}.
\end{rexample}

For linear switching systems under polytopic constraints, one can apply well known iterative reachability-based procedures to construct the maximal invariant set, see, e.g., \cite{Blanchini08}.
The approach taken in this paper follows a similar path. In specific, in order to recover the maximal admissible invariant set,we would like to characterize the fixed point of a set sequence generated by applying the pre-image map of the $\cL$-lifted System \eqref{eq_sysd!} for two different initial condition, namely the $\cL$-lifted set $\cX^{[\cL]}$ \eqref{eq_set_lifted} or $\cX^{[\cL]}\cap\cV$. Nevertheless, two issues not present in the standard reachability analysis approach have to be taken into account: On the one hand, as illustrated in the Running Example and Figure~\ref{Ex_figure2}, the set $\cX^{[\cL]}$ might be unbounded, thus, convergence to the maximal invariant set cannot be guaranteed when starting from the set $\cX^{[\cL]}$. On the other hand, when starting from the set $\cX^{[\cL]}\cap\cV$, one has to account for computations of the  reachability operations involving non-polytopic sets.
We address these two challenges in the remaining of the paper.

\begin{definition}
The pre-image map of a set $\cS\subset\R^{N}$, $N=\sum_{l\in\cL}{n+l-1\choose l}$ with respect to System \eqref{eq_sysd!}  is
\begin{equation}\label{eq_preimage}
\cC(\cS):=\left\{ y\in\R^N: A_i^{[\cL]} y \in\cS, \forall i\in[1,M]\right\}.
\end{equation}
\end{definition}
Next, let us consider the set sequence $\{\cS_i\}_{i\geq 0}$ generated by the iteration
\begin{align}
\cS_0 & \subset\R^N, \label{eq_setseq1}\\
\cS_{i+1}& :=\cC(\cS_i)\cap\cS_0, \label{eq_setseq2}
\end{align}
where $N=\sum_{l\in\cL}{n+l-1\choose l}$ and $\cX^{[\cL]}$ denotes the $\cL$-lift of the set \eqref{eq_set}.
In what follows, we will show convergence of the set sequence to the maximal invariant set choosing different initial condition $\cS_0$ \eqref{eq_setseq1}.

\begin{fact} \label{fact2}
Let $\cX\subset\R^n$ \eqref{eq_set} be a semi-algebraic set satisfying Assumption~\ref{assumption2}.
Then, the set $\cV\cap\cX^{[\cL]}$ is compact.
\end{fact}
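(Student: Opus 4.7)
The plan is to identify $\cV\cap\cX^{[\cL]}$ explicitly as the continuous image of the compact set $\cX$ under the lifting map, and then invoke the standard fact that continuous images of compact sets are compact.

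First I would establish the set equality
\[
\cV\cap\cX^{[\cL]} \;=\; \{x^{[\cL]} : x\in\cX\}.
\]
The inclusion $\supseteq$ is immediate: for $x\in\cX$ the vector $x^{[\cL]}$ lies in $\cV$ by the definition \eqref{eq_manifold}, and using \eqref{eq_seteq} one has $g_i^\top x^{[\cL]}=c_i(x)\leq 1$ for every $i\in[1,p]$, so $x^{[\cL]}\in\cX^{[\cL]}$. For $\subseteq$, any $y\in\cV\cap\cX^{[\cL]}$ can be written $y=x^{[\cL]}$ for some $x\in\R^n$, and the defining inequalities $g_i^\top y\leq 1$ translate, again through \eqref{eq_seteq}, into $c_i(x)\leq 1$ for $i\in[1,p]$, i.e.\ $x\in\cX$.

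Next I would observe that the lifting map $\Phi:\R^n\to\R^N$, $\Phi(x):=x^{[\cL]}$, is continuous, since every component is a monomial in the entries of $x$. By Assumption~\ref{assumption2}, the set $\cX$ is closed and bounded in $\R^n$, hence compact. Therefore $\cV\cap\cX^{[\cL]}=\Phi(\cX)$ is the continuous image of a compact set and is itself compact, which is the claim.

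The only subtle point is the set-equality step, and even that is a routine unpacking of the definitions of $\cV$, $\cX^{[\cL]}$, and of the coefficient vectors $g_i$ coming from \eqref{eq_seteq}; no further machinery is required. Boundedness of $\cV\cap\cX^{[\cL]}$ could alternatively be argued directly from the boundedness of $\cX$ together with the componentwise bound $|x_\alpha|\leq \sqrt{\alpha!}\,\|x\|_\infty^{|\alpha|}$, but the compact-image argument is cleaner and makes closedness automatic.
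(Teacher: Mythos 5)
Your proposal is correct and follows essentially the same route as the paper: identify $\cV\cap\cX^{[\cL]}$ as the image of the compact set $\cX$ under the continuous lifting map and conclude compactness of the continuous image. You simply spell out the set-equality step that the paper states without elaboration.
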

\begin{proof}
Since $\cV\cap{\cX^{[\cL]}}=\{y\in\R^N: (\exists x\in\cX: y=x^{[\cL]})\}$, the statement follows because the continuous polynomial map of a compact set is compact.
\end{proof}

\begin{theorem}\label{theorem1}
Consider the System \eqref{eq_sys}, the constraint set \eqref{eq_set} and the set sequence $\{\cS_i\}_{i\geq 0}$ generated by \eqref{eq_setseq2} with $$\cS_0:=\cV\cap\cX^{[\cL]}.$$
Then, there exists a finite integer $\ok\geq 1$  such that $$\cS_{\ok}=\cS_{\ok+1}$$ and  the maximal admissible invariant set  $\cM$ with respect to the System \eqref{eq_sys} and the constraints \eqref{eq_set} is $\cM=\projection{\cS_{\ok}}.$
\end{theorem}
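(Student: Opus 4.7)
My plan has two parts: show the iteration stabilizes in finitely many steps, and then identify the terminal set with the maximal admissible invariant set after lowering. The foundation is the interpretation, obtained by unwinding the recursion, that
\[
\cS_i = \left\{y \in \cS_0 : y(t) \in \cS_0 \text{ for every switching signal and every } t \leq i\right\},
\]
where $y(t) := A^{[\cL]}_{\sigma(t-1)} \cdots A^{[\cL]}_{\sigma(0)} y$. From this I read off monotonicity $\cS_{i+1} \subseteq \cS_i$ immediately, and I identify $\cS_\infty := \bigcap_i \cS_i$ as the maximal invariant subset of $\cS_0 = \cV \cap \cX^{[\cL]}$ for the lifted System~\eqref{eq_sysd!}. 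Because $\cV$ is invariant under every $A^{[\cL]}_j$ by Fact~\ref{fact1}, every $\cS_i$ sits inside $\cV$.

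To establish finite termination, I would combine asymptotic stability with compactness. By Proposition~\ref{proposition1} the joint spectral radius $\rho(\cA^{[\cL]})$ is strictly less than one, so there exist a norm $\|\cdot\|_*$ on $\R^N$ and $\gamma \in (0,1)$ with $\|A^{[\cL]}_j y\|_* \leq \gamma \|y\|_*$ for every $j$ and $y$. Since $0 \in \interior{\cX}$ by Assumption~\ref{assumption2}, and the lifting map is continuous and proper at $0$ (any sequence $y_k \to 0$ in $\cV$ must come from a sequence $x_k \to 0$, because each pure monomial $x_i^l$ with $l \in \cL$ appears as a coordinate of the lift), there exists $r > 0$ such that $\{y \in \R^N : \|y\|_* \leq r\} \cap \cV \subseteq \cS_0$. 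Now $R := \max_{y \in \cS_0} \|y\|_*$ is finite by compactness of $\cS_0$ (Fact~\ref{fact2}); picking $T$ with $\gamma^T R \leq r$ forces every trajectory originating in $\cS_0$ to enter the contracting tube $\{y \in \cV : \|y\|_* \leq r\} \subseteq \cS_0$ at step $T$ and remain there forever. Hence membership in $\cS_\infty$ reduces to keeping $y(t) \in \cS_0$ only for $t \leq T$, yielding $\cS_T = \cS_\infty$ and the finite stopping $\cS_{\ok} = \cS_{\ok+1}$ with $\ok = T$.

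For the characterization $\cM = \projection{\cS_{\ok}}$, note first that $\cS_{\ok}$ is admissible invariant for the lifted system (invariance follows from $\cS_{\ok} = \cS_{\ok+1} \subseteq \cC(\cS_{\ok})$, and admissibility from $\cS_{\ok} \subseteq \cS_0 \subseteq \cX^{[\cL]}$), so Proposition~\ref{lemma1} immediately gives that $\projection{\cS_{\ok}}$ is admissible invariant for System~\eqref{eq_sys}. For maximality, let $\cT \subseteq \cX$ be any admissible invariant set and set $\cT^{[\cL]} := \{x^{[\cL]} : x \in \cT\} \subseteq \cV \cap \cX^{[\cL]} = \cS_0$. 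Invariance of $\cT$ combined with the identity $A^{[\cL]}_j x^{[\cL]} = (A_j x)^{[\cL]}$ (Fact~\ref{fact1}) makes $\cT^{[\cL]}$ invariant for the lifted dynamics, so a straightforward induction gives $\cT^{[\cL]} \subseteq \cS_i$ for all $i$, hence $\cT = \projection{\cT^{[\cL]}} \subseteq \projection{\cS_{\ok}}$.

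The main obstacle I expect is the finite termination step. Because $\cV$ is a lower-dimensional algebraic variety in $\R^N$, the classical Gilbert--Tan style argument --- which pivots on the origin being an interior point of the constraint set in the \emph{ambient} space --- does not apply directly. The resolution is to run the contraction argument relative to $\cV$, using $0 \in \interior{\cX}$ together with continuity and properness of the lifting to produce a neighborhood of $0$ inside $\cV \cap \cS_0$ instead of inside $\R^N$.
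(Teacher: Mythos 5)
Your proposal is correct and follows essentially the same route as the paper's proof: exponential stability of the lifted system plus compactness of $\cS_0=\cV\cap\cX^{[\cL]}$ (Fact~\ref{fact2}) and the relative neighborhood of the origin inside $\cV\cap\cX^{[\cL]}$ (from $0\in\interior{\cX}$ and properness of the lift at the origin) yield a uniform finite stopping time, after which monotonicity gives $\cS_{\ok}=\cS_{\ok+1}$, and maximality follows by lifting any admissible invariant set onto $\cV$. The only cosmetic differences are that you work with a contractive extremal norm where the paper uses a $\Gamma\varepsilon^t$ bound, and your maximality argument is a direct induction where the paper argues by contradiction.
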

\begin{proof}
 Under Assumption~\ref{assumption1} and from Proposition~\ref{proposition1}, there exist  scalars $\Gamma\geq 1$, $\varepsilon\in (0,1)$
such that $\| y(t)\|\leq \Gamma \varepsilon^t\| y(0)\|$, for all $y(0)\in\R^{N}$, for all $y(t)$ satisfying \eqref{eq_sysd!} and for all $t\geq 0$.
From Fact~\ref{fact2}, there exists a number $R>0$ such that $\|y(0)\|\leq R$, for all $y\in\cS_0$.
Consider the set $\cR=\{y\in\R^N: \|y\|\leq R \}$, the  number $a\in\R$, where
$$a:=\max\{\lambda: \lambda\cR\cap\cV\subseteq \cS_0 \},$$
 and the integer $\ok=\left\lceil \log_{\varepsilon} \frac{a}{\Gamma} \right\rceil.$
Then, $y(0)\in\cS_0$ implies $y(t)\in\cS_0$, for all $t\geq \ok$.
On the other hand, for any $t\geq 0$, the relation $y(t)\in\cS_0$ holds for all $y(0)\in\cS_0$ for which  $y(0)\in\cS_t$.
Let us assume that there exists a vector $y(0)\in\cS_{\ok}$ such that $y(0)\notin\cS_{\ok+1}$. This implies that $y(\ok+1)\notin \cS_0$
which is a  contradiction, thus, $\cS_{\ok+1}\supseteq \cS_{\ok}$.
From \eqref{eq_setseq2}, it holds that $\cS_1\subseteq \cS_0$. Suppose that $\cS_{i+1}\subseteq\cS_{i}$. Then, we have that $\cC(\cS_{i+1})\cap\cS_0\subseteq\cC(\cS_{i})\cap\cS_0$, or $\cS_{i+2}\subseteq\cS_{i+1}$. Consequently, $\cS_{\ok+1}\subseteq \cS_{\ok}$, thus, $\cS_{\ok}=\cS_{\ok+1}$.

Next, we show that $\cM$ is the maximal invariant set. By construction it holds that $\cS_{\ok}\subseteq\cS_0$, thus, $\cM=\projection{\cS_{\ok}}\subseteq\projection{\cS_0}=\cX$.
Moreover, for any $x_0\in\cM$, there exists a $y_0\in\cS_{\ok}$ such that $y_0=x_0^{[\cL]}$. Since $\cS_{\ok}=\cS_{\ok+1}$, it holds that $A_i^{[\cL]}y_0\in\cS_{\ok}$, for all $i\in [1,M]$ or, $(A_{i}x_0)^{[\cL]}\in\cS_{\ok}$, which implies $A_i x_0\in\cM$, for all $i\in[1,M]$. Consequently, by time invariance of the dynamics, $\cM$ is admissible invariant with respect to \eqref{eq_sys}.
To show that $\cM$ is maximal, we assume that there exists an admissible invariant set $\cW\subseteq\cX$  satisfying $\cW\nsubseteq\cM$.
 Then, the set ${\cW_{\cL}}:=\{y\in\R^{N}: (\exists x\in\cW: y:=x^{[\cL]})\}$, $\cW_{\cL}\subseteq\cV\cap\cX^{[\cL]}$, is admissible invariant with respect to \eqref{eq_sysd!} and moreover there exists
a vector $y_0\in{\cW}_{\cL}$ such that  $y_0\notin \cS_{\ok}$. 
Taking into account that $\cV$ is invariant under the dynamics \eqref{eq_sysd!},  the last relation implies that for the vector $x_0\in\cW$, where $y_0=x_0^{[\cL]}$, it holds that $y (\ok)\notin\cX^{[\cL]}\cap\cV$, or,  $x(\ok)\notin \cX$, thus, the set $\cW$ is not admissible invariant and we have reached a contradiction. Consequently, $\cW\subseteq \cM$ and $\cM$ is the maximal admissible invariant set.
\end{proof}

Theorem~\ref{theorem1} establishes that the set iteration defined by the pre-image map and initialized with the intersection between the algebraic variety $\cV$ and the lifted set $\cX$ is convergent. Moreover,
the maximal invariant set for the System \eqref{eq_sys} is retrieved directly, by applying the lowering operation on that fixed point.

As discussed and analyzed in the following section, the involved computations at each iteration for the set sequence are linear. However, checking the convergence condition $\cS_{\ok}=\cS_{\ok+1}$ is equivalent to verifying equivalence between two algebraic varieties, a problem which is known to be NP-hard. The following result establishes that the maximal invariant set has an alternative and equivalent characterization. Moreover, the involved convergence criterion in that case involves checking equivalence between two polytopes, which is known to require the solution, at the worst case, of a series of linear programs only. As it is explained below, this alternative approach comes at the cost of possibly introducing redundancies on the description of the maximal invariant set, which however can be removed algorithmically in a post-processing step.

\begin{theorem}\label{theorem2}
Consider the System \eqref{eq_sys}, the constraint set \eqref{eq_set}, the set sequence $\{\cS_i\}_{i\geq 0}$ generated by \eqref{eq_setseq2} with $$\cS_0:=\cX^{[\cL]}$$
and any compact set $\cB\subset\R^{N}$  satisfying $\cV\cap\cX^{[\cL]}\subseteq \cB$. Then, there exists a finite integer $\ok\geq 1$  such that
$$\cS_{\ok}\cap\cB=\cS_{\ok+1}\cap \cB$$ and the maximal admissible invariant set $\cM$ with respect to the System \eqref{eq_sys} and the constraints \eqref{eq_set} is
$\cM=\projection{\cS_{\ok}}$.
\end{theorem}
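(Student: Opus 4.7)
The plan is to reduce Theorem~\ref{theorem2} to Theorem~\ref{theorem1}. Denote by $\{\tilde\cS_i\}_{i\geq 0}$ the sequence of Theorem~\ref{theorem1}, started at $\tilde\cS_0:=\cV\cap\cX^{[\cL]}$, and let $\ok_1$ be the index it produces, so that $\tilde\cS_{\ok_1}=\tilde\cS_{\ok_1+1}$ and $\projection{\tilde\cS_{\ok_1}}=\cM$. The two sequences $\{\cS_i\}$ and $\{\tilde\cS_i\}$ are tied together by the identity $\cS_i\cap\cV=\tilde\cS_i$ for every $i\geq 0$, which I would establish by a short induction: the base case holds by definition, and the step uses Fact~\ref{fact1} (whence $A_j^{[\cL]}(\cV)\subseteq \cV$) together with the definitions of $\cC$ and \eqref{eq_setseq2}, so that for $y\in\cV$ the condition $A_j^{[\cL]}y\in\cS_i$ is equivalent to $A_j^{[\cL]}y\in\cS_i\cap\cV=\tilde\cS_i$. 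Combined with the identity $\projection{\cS}=\projection{\cS\cap\cV}$ noted in the paper, this gives $\projection{\cS_i}=\projection{\tilde\cS_i}$, which by Theorem~\ref{theorem1} equals $\cM$ for every $i\geq \ok_1$.

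It remains to produce an index at which the iteration stabilises on $\cB$. From the recursion \eqref{eq_setseq2} one sees inductively that $\cS_{i+1}\subseteq\cS_i$, so $\cS_{i+1}\cap\cB\subseteq \cS_i\cap\cB$ for free. For the reverse inclusion I would exploit three ingredients: (i) Proposition~\ref{proposition1} guarantees that the lifted system is GAES on all of $\R^N$, yielding constants $\Gamma\geq 1$ and $\varepsilon\in(0,1)$ with $\|A_{\sigma(t)}^{[\cL]}\cdots A_{\sigma(1)}^{[\cL]}y\|\leq \Gamma\varepsilon^t\|y\|$ for every switching signal and every $y\in\R^N$; (ii) compactness of $\cB$ provides $R>0$ with $\|y\|\leq R$ on $\cB$; and (iii) the origin lies in the interior of $\cX^{[\cL]}=\cS_0$, because $g_i^\top 0=0<1$ for all $i$, so some ball $\{\|y\|\leq r\}$ is contained in $\cS_0$. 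Choosing $\ok_2$ with $\Gamma\varepsilon^{\ok_2+1}R\leq r$, any $y\in\cS_{\ok_2}\cap\cB$ already satisfies the trajectory-containment defining $\cS_{\ok_2}$, and the one additional state at time $\ok_2+1$ has norm at most $\Gamma\varepsilon^{\ok_2+1}R\leq r$, hence lies in $\cS_0$. Therefore $y\in\cS_{\ok_2+1}$, which gives $\cS_{\ok_2}\cap\cB=\cS_{\ok_2+1}\cap\cB$; the same bound (using $\varepsilon<1$) yields $\cS_i\cap\cB=\cS_{i+1}\cap\cB$ for every $i\geq \ok_2$.

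Setting $\ok:=\max\{\ok_1,\ok_2\}$ simultaneously delivers both conclusions of the theorem. The step I expect to be most delicate is the convergence argument on $\cB$: one has to be careful that the exponential decay bound is applied on all of $\R^N$ rather than only on the variety $\cV$, since $\cB$ may contain points off $\cV$; this is exactly what Proposition~\ref{proposition1} provides, through the joint spectral radius inequality for the full lifted matrix set. Beyond that, the containment $\cV\cap\cX^{[\cL]}\subseteq \cB$ assumed in the hypothesis enters only through the compactness/boundedness of $\cB$, so it is a mild requirement that can always be met, for instance by intersecting $\cX^{[\cL]}$ with any sufficiently large ball.
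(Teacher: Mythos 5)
Your proof is correct, and it is organized differently from the paper's in both halves. For the stabilization on $\cB$, the paper also starts from the exponential bound $\|y(t)\|\leq\Gamma\varepsilon^t\|y(0)\|$ supplied by Proposition~\ref{proposition1}, but it measures the number of required steps by the largest $\lambda$ with $\lambda\cB\subseteq\cB\cap\cX^{[\cL]}$, whereas you measure it by a ball $\{\|y\|\leq r\}\subseteq\cX^{[\cL]}$ (available because $g_i^\top 0=0<1$) played against the radius $R$ of the compact set $\cB$. The two devices serve the same purpose; yours has the minor advantage of not implicitly requiring the origin to lie in the interior of $\cB$, which the paper's scaling constant $a$ tacitly needs in order to be positive and attained. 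For the identification of $\cM$, the paper simply says that the rest of the argument ``follows the same steps of the proof of Theorem~\ref{theorem1}'', i.e., it re-runs the monotonicity, fixed-point and maximality reasoning directly on $\{\cS_i\}$; you instead invoke Theorem~\ref{theorem1} as a black box through the bridge identity $\cS_i\cap\cV=\tilde{\cS}_i$ --- which is precisely the paper's Lemma~\ref{lemma3}, proved later in Section~\ref{section4} for algorithmic purposes --- combined with $\projection{\cS}=\projection{\cS\cap\cV}$. This is a legitimate and arguably cleaner shortcut: it delivers $\projection{\cS_i}=\cM$ for every $i\geq\ok_1$ without repeating any invariance or maximality argument, at the cost of front-loading the induction that the paper defers to the implementation section; taking $\ok=\max\{\ok_1,\ok_2\}$ then closes the proof exactly as you state.
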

\begin{proof}
Under Assumption~\ref{assumption1}, there exist scalars $\Gamma\geq 1$, $\varepsilon\in (0,1)$
such that $\| y(t)\|\leq \Gamma \varepsilon^t\| y(0)\|$, for all $y(0)\in\R^{N}$,  $t\geq 0$ and $y(t)$ satisfying \eqref{eq_sysd!}.
Moreover, consider the number
\begin{equation*}
a:=\max\{\lambda: \lambda\cB\subseteq \cB\cap\cX^{[\cL]} \},
\end{equation*}
 and the integer  $\ok=\left\lceil \log_{\varepsilon} \frac{a}{\Gamma} \right\rceil.$
Then, $y(0)\in\cB\cap\cX^{[\cL]}$ implies $y(t)\in\cB\cap\cX^{[\cL]}$, for all $t\geq \ok$.
The rest of the proof follows the same steps of the proof of Theorem~\ref{theorem1}.
\end{proof}

It is worth observing that the sets $\cS_i$, $i\geq 0$ in Theorem~\ref{theorem2} are polyhedral sets.

\begin{remark}
We note that the crucial requirement for this alternative characterization of the maximal admissible invariant set in Theorem~\ref{theorem2} is the boundedness of the set $\cB$,
allowing for the criterion $\cS_{\ok}\cap\cB=\cS_{\ok+1}\cap\cB$ to be verified for a finite integer $\ok\geq 1$.
\end{remark}

The following result applies standard results from the literature to the studied setting, providing a third alternative characterization
of the maximal admissible invariant set, possibly at the cost of adding redundancies in the pre-image map computations.

\begin{theorem}\label{theorem3}
Consider the System \eqref{eq_sys}, the constraint set \eqref{eq_set}, the set sequence $\{\cS_i\}_{i\geq 0}$ generated by \eqref{eq_setseq2} with $$\cS_0:=\cB\cap\cX^{[\cL]},$$
where $\cB\subset\R^{N}$ is a compact polytopic set which contains the origin in its interior and satisfies $\cV\cap\cX^{[\cL]}\subset \cB$. Then, there exists a finite integer $\ok\geq 1$  such that
$$\cS_{\ok}=\cS_{\ok+1}$$ and the maximal admissible invariant set $\cM$ with respect to the System \eqref{eq_sys} and the constraints \eqref{eq_set} is
$\cM=\projection{\cS_{\ok}}$.
\end{theorem}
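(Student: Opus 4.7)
The plan is to reduce the statement to a classical result on computing the maximal admissible invariant set of a linear switching system under a compact polytopic constraint (as in Blanchini–Miani or Gilbert–Tan), applied to the lifted dynamics \eqref{eq_sysd!} with constraint $\cS_0=\cB\cap\cX^{[\cL]}$, and then transport the conclusion back to the original state space via the lowering operation, mimicking the last part of the proof of Theorem~\ref{theorem1}.

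First, I would check that $\cS_0$ satisfies the hypotheses of the classical construction: it is a bounded polyhedron (intersection of the compact polytope $\cB$ with the polyhedron $\cX^{[\cL]}$), and it contains the origin in its interior, since $\cB$ does and $g_i^{\top}0=0<1$ for $i\in[1,p]$. By Assumption~\ref{assumption1} and Proposition~\ref{proposition1}, the lifted system \eqref{eq_sysd!} is GAES, so there exist $\Gamma\geq 1$ and $\varepsilon\in(0,1)$ with $\|y(t)\|\leq \Gamma\varepsilon^t\|y(0)\|$. Let $R>0$ bound $\cS_0$ in norm, pick $a>0$ with $a\cB\subseteq\cS_0$ (possible because $0\in\interior{\cB}$ and $0\in\interior{\cS_0}$ imply $\cS_0$ absorbs a scaled copy of the bounded set $\cB$), and set $\ok:=\lceil\log_{\varepsilon}(a/(\Gamma R))\rceil$. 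Then every trajectory starting in $\cS_0$ lies in $a\cB\subseteq\cS_0$ for all $t\geq\ok$, so $y(0)\in\cS_{\ok}$ forces $y(t)\in\cS_0$ for $t\geq\ok+1$ as well, giving $\cS_{\ok}\subseteq\cS_{\ok+1}$. Combined with the monotone decrease $\cS_{i+1}\subseteq\cS_i$ (which follows inductively from $\cS_{i+1}=\cC(\cS_i)\cap\cS_0\subseteq\cS_0$ and the monotonicity of $\cC$), we obtain the fixed-point equality $\cS_{\ok}=\cS_{\ok+1}$ in a finite number of steps.

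The fixed point $\cS_{\ok}$ is then the maximal set in $\cS_0$ that is invariant under every $A_i^{[\cL]}$. I would now transport this to $\R^n$: $\projection{\cS_{\ok}}\subseteq\projection{\cS_0}\subseteq\projection{\cX^{[\cL]}}=\cX$, and the same argument as in the second half of the proof of Theorem~\ref{theorem1} (using $A_i^{[\cL]}x^{[\cL]}=(A_ix)^{[\cL]}$ from Fact~\ref{fact1}) shows that $\cM:=\projection{\cS_{\ok}}$ is admissible invariant for \eqref{eq_sys}. For maximality, given any admissible invariant $\cW\subseteq\cX$, lift it to $\cW_{\cL}:=\{x^{[\cL]}:x\in\cW\}$; this set lies in $\cV\cap\cX^{[\cL]}\subset\cB$, hence in $\cS_0$, and by Fact~\ref{fact1} together with the invariance of $\cV$ under \eqref{eq_sysd!}, $\cW_{\cL}$ is invariant under each $A_i^{[\cL]}$. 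An induction then yields $\cW_{\cL}\subseteq\cS_i$ for all $i$, in particular $\cW_{\cL}\subseteq\cS_{\ok}$, so $\cW=\projection{\cW_{\cL}}\subseteq\cM$.

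The main obstacle I anticipate is not the finite termination (this is essentially the classical Gilbert–Tan argument once exponential stability and compactness of $\cS_0$ are in hand) but rather the maximality claim: the iterates $\cS_i$ are polyhedra in $\R^N$ that do not lie on the variety $\cV$, so they contain many ``spurious'' points outside $\cV$ whose projections via $\projection{\cdot}$ need not correspond to genuine trajectories of \eqref{eq_sys}. One must exploit the fact that $\cV$ is forward-invariant under every $A_i^{[\cL]}$ in order to conclude that the lift of any invariant set of \eqref{eq_sys} survives the whole iteration and, conversely, that $\projection{\cS_{\ok}}$ inherits invariance in the original space; this is exactly the role played by Proposition~\ref{lemma1} together with Fact~\ref{fact1}, and it is the step that requires the origin to lie in $\interior{\cX}$ (Assumption~\ref{assumption2}).
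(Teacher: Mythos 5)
Your proposal is correct and follows essentially the same route as the paper: the paper also reduces Theorem~\ref{theorem3} to the classical finite-termination result for a compact polyhedral constraint set with the origin in its interior (citing \cite[Ch.~5]{Blanchini08} where you unpack the Gilbert--Tan argument explicitly, exactly as in the proofs of Theorems~\ref{theorem1} and \ref{theorem2}), and then transports maximality back to $\R^n$ via Proposition~\ref{lemma1}, the invariance of $\cV$, and the containment $\cV\cap\cX^{[\cL]}\subset\cB$. The only quibble is a constant in your choice of $\ok$ (you need $\Gamma\varepsilon^{\ok}R$ below the radius of a ball inscribed in $a\cB$, not merely below $a$), but this is a trivially fixable normalization and does not affect the argument.
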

\begin{proof}
From Fact~\ref{fact2}, the set $\cX\cap\cV$ is compact, thus, by construction and Assumption~\ref{assumption2}, the set $\cS_0$ is compact and contains the origin in its interior.
Consequently, under Assumption~\ref{assumption1}, from \cite[Ch. 5]{Blanchini08} there exists a finite integer $\ok$ such that $\cS_{\ok}$ is the maximal admissible invariant set with respect to $\cX^{[\cL]}$. Taking into account Proposition~\ref{lemma1} and observing that $\cV\cap\cX^{[\cL]}\subset \cB$ and that $\cV$ is invariant under \eqref{eq_sysd!}, the result follows.
\end{proof}

\begin{remark}
We can replace boundedness of $\cB$ in Theorem~\ref{theorem3} with requiring $\cB$ to be a symmetric polyhedron whose defining matrix in the half-space description satisfies an observability condition with at least a member of the set\footnote{ $\conv{\cdot}$ stands for the convex hull.} $\conv{  \{A_1^{[\cL]},...,A_M^{[\cL]}  \}   }$. For more details see, e.g., \emph{\cite{gilbert:tan:1991}}.
\end{remark}

\section{Implementation }\label{section4}

In this section, we present three algorithmic procedures for computing the maximal admissible invariant set for the System \eqref{eq_sys} subject to the constraints \eqref{eq_constraints}.
In detail, we present an efficient way to realize the set sequences and verify the convergence criteria of the theoretical results of the previous section.
First, we establish the relationship between the set sequences generated in Theorem~\ref{theorem1} and Theorem~\ref{theorem2}.

\begin{table}\label{table11}
\centering
\begin{tabular}{|c|c|c|} \hline

\begin{tabular}[x]{@{}c@{}}\textbf{Theorem /} \\ \textbf{Algorithm} \end{tabular} & \begin{tabular}[x]{@{}c@{}} \textbf{Initial} \textbf{ set} $\cS_0$\end{tabular} &\begin{tabular}[x]{@{}c@{}}\textbf{Convergence criterion} \end{tabular} \\ \hline

$1$ & $\cV\cap\cX^{[\cL]}$ & $\cS_{\ok+1}=\cS_{\ok}$ \\ \hline

$2$ & $\cX^{[\cL]}$ & $\cS_{\ok+1}\cap\cB=\cS_{\ok}\cap\cB$ \\ \hline

$3$ & $\cB\cap\cX^{[\cL]}$  & $\cS_{\ok+1}=\cS_{\ok}$  \\ \hline

\end{tabular}
\caption{Summary of the results of section~\ref{section3} and section~\ref{section4}: Each set sequence obeys the update relation $\cS_{i+1}=\cC(\cS_i)\cap\cS_0$.
The sets $\cV\subset\R^{N}$, $\cX^{[\cL]}$ are defined in \eqref{eq_manifold} and \eqref{eq_set_lifted} respectively,
the compact set $\cB\subset\R^N$ satisfies $\cB\supseteq\cV\cap\cX^{[\cL]}$ in Theorem/Algorithm $2$,
while the compact polytopic set $\cB\subset\R^N$ in  Theorem/Algorithm $3$ contains the origin in its interior and satisfies $\cB\supseteq \cV\cap\cX^{[\cL]}$.}
\end{table}

\begin{fact}\label{fact3}
Consider any two sets $\cY\subset\R^N,\cZ\subset\R^N$, the pre-image map~\eqref{eq_preimage} and the Veronese variety $\cV$ \eqref{eq_manifold}. Then, (i) $\cC(\cY\cap\cZ)=\cC(\cY)\cap\cC(\cZ)$ ,and (ii)
$\cV \subseteq \cC(\cV)$.
\end{fact}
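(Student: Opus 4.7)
The plan is to verify both parts directly from the definitions, with part (i) reducing to an elementary set-theoretic unpacking of the pre-image map and part (ii) exploiting the key algebraic identity from Fact~\ref{fact1}.

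For part (i), I will chase the definition of $\cC$. Recall $\cC(\cS)=\{y\in\R^N:A_i^{[\cL]}y\in\cS,\ \forall i\in[1,M]\}$. Given $y\in\R^N$, the condition $y\in\cC(\cY\cap\cZ)$ reads: for every $i\in[1,M]$, $A_i^{[\cL]}y\in\cY\cap\cZ$, i.e., $A_i^{[\cL]}y\in\cY$ \emph{and} $A_i^{[\cL]}y\in\cZ$. Since this conjunction must hold for every $i$, it is equivalent to requiring $A_i^{[\cL]}y\in\cY$ for all $i$ \emph{and} $A_i^{[\cL]}y\in\cZ$ for all $i$, i.e., $y\in\cC(\cY)\cap\cC(\cZ)$. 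This is just the standard fact that pre-image commutes with intersections, so I expect no obstacle here.

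For part (ii), I need to show that $\cV$ is forward-invariant under each generator $A_i^{[\cL]}$. Take $y\in\cV$; by definition \eqref{eq_manifold} there exists $x\in\R^n$ with $y=x^{[\cL]}$. Applying Fact~\ref{fact1},
\begin{equation*}
A_i^{[\cL]}y \;=\; A_i^{[\cL]}x^{[\cL]} \;=\; (A_i x)^{[\cL]},
\end{equation*}
which exhibits $A_i^{[\cL]}y$ as the $\cL$-lift of the vector $A_i x\in\R^n$, hence $A_i^{[\cL]}y\in\cV$. Since this holds for every $i\in[1,M]$, we conclude $y\in\cC(\cV)$.

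Neither part poses a real obstacle; the only subtlety worth flagging is that part (ii) yields $\cV\subseteq\cC(\cV)$ but in general not equality, because pre-images of lifted vectors under $A_i^{[\cL]}$ may fail to be lifted vectors themselves when $A_i$ is not invertible. This asymmetry is exactly what makes $\cV$ a \emph{forward-}invariant but not necessarily \emph{backward-}invariant variety, and it is what the subsequent arguments rely on when intersecting iterated pre-images with $\cV$.
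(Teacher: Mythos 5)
Your proof is correct and follows the same route as the paper: part (i) is the definitional unpacking of the pre-image map, and part (ii) invokes the invariance of $\cV$ under the lifted dynamics, which the paper itself derives from Fact~\ref{fact1} exactly as you do. Your version merely spells out the details (and adds a useful remark on why equality can fail in (ii)) where the paper's proof is a one-line citation of these facts.
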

\begin{proof}
Statement (i) follows from the definition \eqref{eq_preimage}, while (ii) follows from the fact that $\cV$ is invariant with respect to the System \eqref{eq_sysd!}.
\end{proof}

\begin{algorithm}\label{algorithm1}
\caption{\textbf{Inputs}:  $\cA:=\{A_1,...,A_M\}\subset\R^{n\times n}$, $\cX=\{x\in\R^n: c_i(x)\leq 1, i\in[1,p]\}$
 \textbf{Output:} The maximal admissible invariant set $\cM$. }
\begin{algorithmic}[1]
\STATE  Extract $\cL\subseteq \{1,d\}$, $g_{i}\in\R^{N}$, satisfying $g_i^\top x^{[\cL]}=c_i(x)$, $i\in[1,p]$.
\STATE $i\leftarrow 0$,  eq$\leftarrow 0$, $\cZ_0\leftarrow \cX^{[\cL]}$, $\cY_0\leftarrow \cZ_0\cap\cV$
\WHILE{eq$=0$}
\STATE $\cZ_{i+1}\leftarrow \cC(\cZ_i)$, as in \eqref{eq_polpre1}, \eqref{eq_polpre2}
\STATE Compute the minimal description of $\cZ_{i+1}$ (Appendix~\ref{appendix1})
\STATE $\cY_{i+1}\leftarrow\cZ_{i+1}\cap\cV$
\STATE Compute the minimal description of $\cY_{i+1}$ (Appendix~\ref{appendix2})
\IF{$\cY_{i+1}=\cY_{i}$}
\STATE eq$\leftarrow 1$
\ENDIF
\STATE $i\leftarrow i+1$
\ENDWHILE
\STATE $\cM\leftarrow\projection{\cZ_{i}}$
\end{algorithmic}
\end{algorithm}

\begin{lemma}\label{lemma3}
Let $\{\cY_i\}_{i\geq 0}$, $\{\cZ_i  \}_{i\geq 0}$ be the set sequences generated by \eqref{eq_setseq2} with initial conditions
$\cY_0=\cV\cap\cX^{[\cL]}$ and $\cZ_0=\cX^{[\cL]}$ respectively.
Then, the relation
\begin{equation}\label{eq_lem3_1}
\cY_i=\cZ_i\cap\cV, \quad \forall i\geq 0
\end{equation}
 holds.
\end{lemma}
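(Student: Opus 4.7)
The plan is to proceed by induction on $i$, using Fact~\ref{fact3} as the main tool.

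For the base case $i=0$, the equality $\cY_0 = \cV \cap \cX^{[\cL]} = \cZ_0 \cap \cV$ is immediate from the definitions of the two initial sets.

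For the inductive step, I would assume $\cY_i = \cZ_i \cap \cV$ and unfold the recursion:
\begin{equation*}
\cY_{i+1} = \cC(\cY_i)\cap\cY_0 = \cC(\cZ_i\cap\cV)\cap(\cV\cap\cX^{[\cL]}).
\end{equation*}
Now I would apply Fact~\ref{fact3}(i) to distribute the pre-image map over the intersection, giving $\cC(\cZ_i\cap\cV) = \cC(\cZ_i)\cap\cC(\cV)$. Then, using Fact~\ref{fact3}(ii), namely $\cV\subseteq\cC(\cV)$, the factor $\cC(\cV)$ becomes redundant once we intersect with $\cV$, so $\cC(\cV)\cap\cV = \cV$. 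Rearranging the remaining intersections yields
\begin{equation*}
\cY_{i+1} = \bigl(\cC(\cZ_i)\cap\cX^{[\cL]}\bigr)\cap\cV = \cZ_{i+1}\cap\cV,
\end{equation*}
which closes the induction.

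There is no real obstacle here; the statement is essentially an algebraic consequence of the fact that the pre-image map commutes with intersections and that the Veronese variety $\cV$ is forward-invariant (hence a subset of its own pre-image). The only thing to be careful about is the order of intersections when rewriting $\cC(\cZ_i)\cap\cV\cap\cX^{[\cL]}$ as $(\cC(\cZ_i)\cap\cX^{[\cL]})\cap\cV$, but since set intersection is commutative and associative this is routine.
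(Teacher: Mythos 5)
Your proposal is correct and follows essentially the same route as the paper: induction on $i$, with the base case holding by definition and the inductive step using Fact~\ref{fact3}(i) to distribute $\cC$ over the intersection and Fact~\ref{fact3}(ii) to absorb $\cC(\cV)\cap\cV$ into $\cV$, after which the terms regroup into $\cZ_{i+1}\cap\cV$. No differences worth noting.
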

\begin{proof}
For $i=0$, \eqref{eq_lem3_1} holds by definition. Suppose that \eqref{eq_lem3_1} holds for $i=k$.
Then, for $i=k+1$ and taking into account Fact~\ref{fact3}, it follows that
\begin{align*}
\cY_{k+1}&=\cC(\cZ_k\cap\cV)\cap\cV\cap\cX^{[\cL]}=\cC(\cZ_k)\cap\cC(\cV)\cap\cV\cap\cZ_0 \\
& =\cC(\cZ_k)\cap\cZ_0\cap \cV=\cZ_{k+1}\cap\cV,
\end{align*}
thus, relation \eqref{eq_lem3_1} holds for all $i\geq 0$.
\end{proof}

Lemma~\ref{lemma3} states that the set sequence defined in Theorem~\ref{theorem1} can be generated in two steps and in specific by computing first the pre-image map of a polyhedral set and consequently its intersection with the manifold $\cV$.

\begin{remark}\label{remark5}
In Line~$4$ of Algorithm~1 the computation of the pre-image map of a polyhedral set is required.
To this end, let $\cZ_i\subset\R^N$ be the polyhedral set computed at iteration $i$ in half-space representation, i.e.,
\begin{equation}\label{eq_polyhedral}
\cZ_i:=\{ y\in\R^N: G_iy\leq 1_{p_i}  \},
\end{equation}
where $G_i\in\R^p_i\times N$ and $p_i\geq 1$. Then, the pre-image map $\cC(\cS)$ with respect to the System \eqref{eq_sysd!} is
\begin{align}
\cC(\cZ_i) & =\{y\in\R^N: G_iA^{[\cL]}_jy\leq 1_{p_i}, j=1,...,M \} \nonumber \\
& =  \{y\in\R^N: G^\star y\leq 1_{p^\star}\}, \label{eq_polpre1}
\end{align}
 where $p^\star=pM$ and
\begin{equation}\label{eq_polpre2}
G^\star=\left[ \quad (G_iA_1^{[\cL]})^{\top}\quad \ldots \quad (G_iA_M^{[\cL]})^\top\right]^\top.
\end{equation}
The number of hyperplanes that describe the set $\cZ_i$ is bounded by $p^M$, where $p$ is the number of hyperplanes that describe the set $\cX^{[\cL]}$
and $M$ is the number of matrices defining the system \eqref{eq_sys}. However, in practice the number of hyperplanes, or equivalently, the size  of the matrices $G_i$, $i\geq 0$
that are required to describe $\cZ_i$ is significantly smaller.
\end{remark}
In Appendix \ref{appendix1}, a procedure of
computing the minimal representation of the set $\cZ_i$, required in Line~$5$ of Algorithm~$1$ is described.

\begin{algorithm} \label{algorithm2}
\caption{\textbf{Inputs}:  $\cA:=\{A_1,...,A_M\}\subset\R^{n\times n}$, $\cX=\{x\in\R^n: c_i(x)\leq 1, i\in[1,p]\}$, compact polytopic set $\cB\supset \cX^{[\cL]} \cap\cV$ (Appendix \ref{appendix3})
 \textbf{Output:} The maximal admissible invariant set $\cM$. }
\begin{algorithmic}[1]
\STATE  Extract $\cL\subseteq \{1,d\}$, $g_{i}\in\R^{N}$, satisfying $g_i^\top x^{[\cL]}=c_i(x)$, $i\in[1,p]$.
\STATE $i\leftarrow 0$, eq$\leftarrow 0$, $\cZ_0\leftarrow \cX^{[\cL]}$
\WHILE{eq$=0$}
\STATE $\cZ_{i+1}\leftarrow \cC(\cZ_i)$, as in \eqref{eq_polpre1}, \eqref{eq_polpre2}
\STATE Compute the minimal description of $\cZ_{i+1}$ (Appendix~\ref{appendix1})
\IF{$\cY_{i+1}\cap\cB=\cY_{i}\cap\cB$}
\STATE eq$\leftarrow 1$
\ENDIF
\STATE $i\leftarrow i+1$
\ENDWHILE
\STATE $\cM\leftarrow\projection{\cY_{i}}$
\STATE (\emph{optional}) Compute the  minimal representation of $\cM$ (Appendix \ref{appendix2})
\end{algorithmic}
\end{algorithm}

The set $\cY_{i}=\cZ_{i}\cap\cV$ in Line $6$ of Algorithm~$1$ has a straightforward description. In specific, if $\cZ_i$ is described by \eqref{eq_polyhedral},
it holds that
\begin{equation}
\cY_i=\{y\in\R^N:(\exists x\in\R^n: y=x^{[\cL]},  G_i y\leq 1_{p_i} ) \}.
\end{equation}
However, computing the minimal description of the set $\cY_{i+1}$ in Algorithm~$1$, or in other words removing the redundant polynomial inequalities of the set $\projection{\cY_{i}}$, is equivalent to
verifying equivalence between two algebraic varieties.
The approach taken in this paper is to iteratively check for redundancy of each hyperplane of the set $\cY_i$, or equivalently,
to check for redundant polynomial inequalities of the set $\projection{\cY_i}$.
In Appendix~\ref{appendix2}, a possible approach for tackling this problem, based on a version of the Positivstellensatz  \cite{Putinar:93}, \cite[Theorem 3.138]{Parrilo:12},  is presented.

\begin{algorithm} \label{algorithm3}
\caption{\textbf{Inputs}:  $\cA:=\{A_1,...,A_M\}\subset\R^{n\times n}$, $\cX=\{x\in\R^n: c_i(x)\leq 1, i\in[1,p]\}$, compact polytopic set $\cB\supset \cX^{[\cL]} \cap\cV$ containing the origin in its interior (Appendix \ref{appendix3})
 \textbf{Output:} The maximal admissible invariant set $\cM$. }
\begin{algorithmic}[1]
\STATE  Extract $\cL\subseteq \{1,d\}$, $g_{i}\in\R^{N}$, satisfying $g_i^\top x^{[\cL]}=c_i(x)$, $i\in[1,p]$.
\STATE $i\leftarrow 0$,  eq$\leftarrow 0$, $\cZ_0\leftarrow \cB\cap \cX^{[\cL]}$
\WHILE{eq$=0$}
\STATE $\cZ_{i+1}\leftarrow \cC(\cZ_i)$, as in \eqref{eq_polpre1}, \eqref{eq_polpre2}
\STATE Compute the minimal description of $\cZ_{i+1}$ (Appendix~\ref{appendix1})
\IF{$\cY_{i+1}\cap\cB=\cY_{i}\cap\cB$}
\STATE eq$\leftarrow 1$
\ENDIF
\STATE $i\leftarrow i+1$
\ENDWHILE
\STATE $\cM\leftarrow\projection{\cY_{i}}$
\STATE (\emph{optional}) Compute the  minimal representation of $\cM$ (Appendix \ref{appendix2})
\end{algorithmic}
\end{algorithm}

Contrary to Algorithm~$1$, Algorithms~$2$ and $3$ are based solely on linear operations and on solving linear programs.
It is worth observing that the number of iterations needed in Algorithms~$2$ and $3$ to recover the maximal admissible invariant set
is lower bounded by the number of iterations needed in Algorithm~$1$. This is the cost that has to be paid in order to avoid computing the minimal representation of the set $\cZ_{i+1}\cap\cV$ at each iteration in Algorithm~$1$.
Naturally, if one is interested in the minimal representation of the maximal admissible invariant set $\cM$,
the approach described in Appendix \ref{appendix2} can be used in a single post-processing step in Line $12$ of Algorithms~$2$~and~$3$.

\begin{rexample}
We implement Algorithm~$2$ in order to compute the maximal admissible invariant set. To this end, we first choose a compact polytopic set
\begin{equation*}
\cB=\{y\in\R^3: -\sqrt{2}\leq y_2\leq \sqrt{2}, 0\leq y_i\leq 1, i=1,3   \}
\end{equation*}
such that $\cB\supset\cV\cap\cX^{[\cL]}$.
As described above, we set $\cZ_0=\cX^{[\cL]}$. The Algorithm $2$ converges after $8$ iterations, i.e., the relation $\cZ_{8}\cap\cB=\cZ_7\cap\cB$
is satisfied. In Figure \ref{Ex_comp4} the set $\cZ_7$ is shown in blue while the hyperplanes that define the set $\cX^{[\cL]}$ are also shown in grey.
In Figure~\ref{Ex_comp3}, the maximal invariant set $\projection{\cZ_{7}}$ together with the constraint set $\cX$ are shown. It is worth observing that the maximal invariant set is not convex, as expected.
The level curves of the polynomial functions that define the maximal invariant set are also shown. In specific, there are $14$ polynomials in total which define the set, out of which $5$ of them are redundant and have been identified by applying the post-processing step (Line $12$ of Algorithm $2$).


\begin{figure}[h]
\begin{center}
\includegraphics[height=4cm]{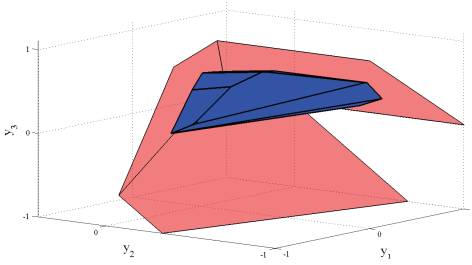}
\end{center}
\caption{Running example, the set $\cZ_7$ (blue) and the hyperplanes that define the set $\cX^{[2]}$ (red).}
\label{Ex_comp4}
\end{figure}
\end{rexample}

\begin{figure}[h]
\begin{center}
\includegraphics[height=4cm]{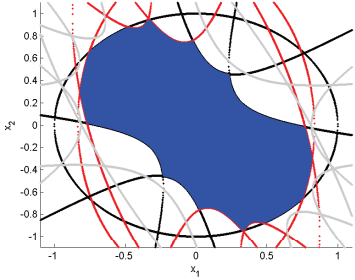}
\end{center}
\caption{Running example, the maximal admissible invariant set $\projection{\cZ_{7}}$ (blue) and the level curves of the polynomial inequalities which define $\projection{\cZ_7}$:
The polynomial constraints inherited from $\cX$ are in black, the added active constraints are in red, whereas the added redundant constraints are shown in grey.}
\label{Ex_comp3}
\end{figure}

%
%
%
%
%
%
%

Finally, two properties of the maximal invariant set $\cM$ which are inherited from the constraint set  $\cX$ are summarized below.
\begin{proposition}
Consider the System \eqref{eq_sys} subject to  constraints \eqref{eq_set} and let $\cM$ be the maximal admissible invariant set.
Then, the following hold:

\noindent \quad \quad (i) $\cM$ is the sub-level set of a max-polynomial function of at most degree $d$, described by a finite number of pieces.

\noindent \quad \quad  (ii) If $\cX$ is convex, then $\cM$ is convex.

\end{proposition}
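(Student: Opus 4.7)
The plan is to deduce both statements from the characterization $\cM = \projection{\cZ_{\bar k}}$ furnished by Theorems~\ref{theorem1}--\ref{theorem3}, together with the linearity of the lifted dynamics.

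For part (i), I would start from any of Algorithms $1$--$3$, which guarantee a finite index $\bar k$ such that $\cZ_{\bar k}\subset \R^N$ is a polyhedron admitting a finite half-space representation
\begin{equation*}
\cZ_{\bar k}=\{y\in\R^N: f_i^\top y\le 1,\ i=1,\dots,q\},
\end{equation*}
for some $q<\infty$. Applying the lowering operation (cf.\ Remark~\ref{remark1.5}) gives
\begin{equation*}
\cM=\projection{\cZ_{\bar k}}=\{x\in\R^n: f_i^\top x^{[\cL]}\le 1,\ i=1,\dots,q\}.
\end{equation*}
Since $\cL\subseteq[1,d]^d$, each function $p_i(x):=f_i^\top x^{[\cL]}$ is a polynomial in $x$ of degree at most $d$. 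Consequently
\begin{equation*}
\cM=\bigl\{x\in\R^n: V(x)\le 1\bigr\},\qquad V(x):=\max_{i=1,\dots,q} p_i(x),
\end{equation*}
which exhibits $\cM$ as the unit sub-level set of a max-polynomial function of degree at most $d$ described by finitely many pieces. The only subtle point is to ensure the half-space description of $\cZ_{\bar k}$ is finite; this is inherited from the finiteness of the starting set $\cX^{[\cL]}$ and of $\cC(\cdot)$ at each step (cf.\ Remark~\ref{remark5}), together with the finite termination of the iteration.

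For part (ii), I would rely on the maximality of $\cM$. Assume $\cX$ is convex and let $\cW:=\conv{\cM}$. Because $\cM\subseteq\cX$ and $\cX$ is convex, $\cW\subseteq\cX$. To verify that $\cW$ is invariant with respect to \eqref{eq_sys}, take $x=\lambda x_1+(1-\lambda)x_2$ with $x_1,x_2\in\cM$ and $\lambda\in[0,1]$, and let $\sigma(\cdot)$ be any switching signal. By linearity of the dynamics the trajectory starting from $x$ under $\sigma$ is $x(t)=\lambda x_1(t)+(1-\lambda)x_2(t)$, where $x_k(t)$ denotes the trajectory starting from $x_k$ under the same $\sigma$. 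Since $\cM$ is invariant, $x_k(t)\in\cM\subseteq\cX$ for all $t\ge 0$ and $k=1,2$, and convexity of $\cX$ yields $x(t)\in\cX$. Iterating this argument over all convex combinations shows $\cW$ is admissible invariant, and maximality of $\cM$ forces $\cW\subseteq\cM$; hence $\cM=\conv{\cM}$ is convex.

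The main (and essentially only) obstacle is the bookkeeping in part (i): one has to be careful that the hyperplanes produced by the iterations of Algorithms~$1$--$3$ really do correspond to polynomials in $x^{[\cL]}$, i.e.\ that all the operations (pre-image map, intersection with $\cX^{[\cL]}$, optional intersection with $\cB$ and with $\cV$) preserve a polyhedral half-space description in $\R^N$ whose rows are indexed by polynomials of degree at most $d$. This is immediate from Remark~\ref{remark5} and from the fact that $\cL$ is fixed throughout the iteration, so degrees never increase. Part (ii) then reduces to the standard observation that convexity is propagated through linear dynamics.
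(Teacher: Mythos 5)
Your proposal is correct. Part (i) is essentially the paper's argument, just written out in more detail: the paper likewise observes that the sets $\cZ_i$ produced by Algorithms~2 and 3 are polyhedra with finitely many half-spaces, that the iteration terminates in finite time, and that lowering a finite half-space description yields finitely many polynomial inequalities of degree at most $d$; your explicit formula $V(x)=\max_i f_i^\top x^{[\cL]}$ is exactly what the paper leaves implicit. Part (ii), however, follows a genuinely different route. The paper argues \emph{forward through the construction}: it reduces to showing that the pre-image map $\cC(\cS)$ of a convex set $\cS$ is convex (composition of a convex function with a linear map, maximum of convex functions), so that convexity is preserved at every step of the iteration initialized with the convex set $\cX$, and hence the fixed point $\cM$ is convex. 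You instead argue \emph{from maximality}: $\conv{\cM}$ is admissible (since $\cX$ is convex) and invariant (since the dynamics are linear, so $A_j\conv{\cM}=\conv{A_j\cM}\subseteq\conv{\cM}$), hence $\conv{\cM}\subseteq\cM$ by Definition~2, forcing $\cM=\conv{\cM}$. Your argument is cleaner in one respect: it does not rely on the defining functions $c_i$ being convex functions (which convexity of the set $\cX$ does not by itself guarantee, a point the paper's justification glosses over), only on convexity of the set $\cX$ and linearity of the dynamics. The paper's approach, in exchange, gives the slightly stronger information that every iterate $\cS_i$ is convex, not just the limit. One small wording slip on your side: to establish invariance of $\cW=\conv{\cM}$ you should conclude $x(t)\in\conv{\cM}=\cW$ (which is what your convex-combination identity actually gives), rather than merely $x(t)\in\cX$; admissibility and invariance are separate requirements and your argument does deliver both.
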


\begin{proof}
(i) Follows directly from Algorithms~$2$, $3$ and in specific from the facts that the sets $\cZ_i$, $i\geq 0$, are polyhedral and that the algorithm terminates in finite time.

\noindent (ii)  Taking into account Theorem~\ref{theorem1}, it is enough to show that the pre-image map $\cC(\cS)$ with respect to \eqref{eq_sys} is always convex when  $\cS:=\{x\in\R^n: c_i(x)\leq 1, i=1,...,p   \}\subset\R^n$ is convex.
Since $\cC(\cS)=\{ x\in\R^n : c_i(A_j(x))\leq 1, i=1,...,p, j=1,...,M  \}$ and taking into account \cite[Ch. 3]{boyd:vandenberghe:2004} that
the composition of a convex function and a linear function is convex and the maximum of convex functions is convex, it follows that $\cC(\cS)$ is convex, thus, the maximal invariant set is convex.
\end{proof}

\section{Numerical examples}\label{section5}

\begin{example}
We consider a linear time invariant system
\begin{equation}
x(t+1)=Ax(t),
\end{equation}
with $A=\begin{bmatrix}  1.0216 & 0.3234 \\ -0.6597  &  0.5226\end{bmatrix}$. We are
interested in computing the maximal admissible invariant set when the constraint set $\cX\subset\R^2$ is the unit circle.
For all three Algorithms $1$-$3$, the maximal admissible invariant set $\cM$ is recovered in exactly $6$ iterations. For comparison, we compute the maximum invariant ellipsoid $\cE_{\max}$ contained in $\cX$ by solving a linear matrix inequality problem, (for details see, e.g., \cite[Ch. 5]{boyd:ghaoui:feron:balakrishnan:1994}). As expected, we can see in Figure~\ref{numex1} that $\cE_{\max}\subset\cM$.

\begin{figure}[h]
\begin{center}
\includegraphics[height=4.5cm]{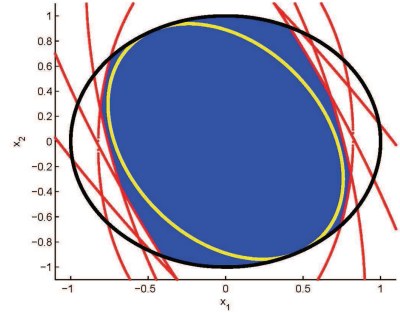}
\end{center}
\caption{Example 1, the maximal admissible invariant set $\cM$ is in blue, the level curves of the polynomial inequalities which define the set $\cM$ are in red, while the maximum invariant inscribed ellipsoid  $\cE_{\max}$ is shown in yellow.}
\label{numex1}
\end{figure}
\end{example}


\begin{example} \label{Example6}
We consider the System \eqref{eq_sys} with $\cA=\{A_1,A_2\}$, where
$A_1=\begin{bmatrix}  0.2137 & 1.2052 \\ -0.2125  &  0.1703\end{bmatrix}$,
$A_2=\begin{bmatrix}  -0.3576 & 1.0351 \\ 0.3290  &  0.3514\end{bmatrix}$.
The constraint set $\cX$ \eqref{eq_set} is non-simply connected and is described by the intersection of the unit circle and the
complements of two circles and an ellipse. In this setting we have $\cL=\{1,2\}$.  By applying Algorithm~$2$, the maximal admissible invariant set $\cM$ is retrieved in $5$ iterations and is described by $36$ polynomial inequalities. It is worth observing that the set $\cM$ is not connected.

%
%
%
%
\begin{figure}[h]
\begin{center}
\includegraphics[height=4.5cm]{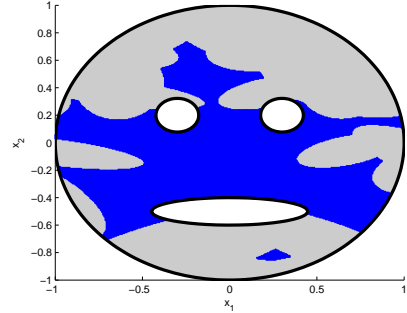}
\end{center}
\caption{Example 2, the maximal admissible invariant set $\cM$ is shown in blue, while the constraint set $\cX$ is shown in grey.}
\label{numex2}
\end{figure}
\end{example}


\section{Conclusions and future work}
In this work, we studied the computation of the maximal admissible invariant
set for switching linear discrete time systems that are subject
to semi-algebraic constraints. In this setting, the maximal admissible invariant set might be non-convex, or even non-connected.
However, we showed that, despite the complexity of these constraints, the computation of the maximal admissible invariant set
can be reduced to a problem with much simpler linear constraints (i.e., a polytopic constraint set).
The approach consists in applying the Veronese embedding and consequently
lifting the system and the constraint set in a higher dimensional
space, allowing for efficient reachability operations.

This comes at the price of inflating the dimension, and hence, the number of variables, calling for a careful study of the computational burden necessary for computing these invariant sets.  In this work, we made a first step in that direction by presenting three different algorithms, with different advantages. Moreover, we suggested several subroutines that are required. We leave for further research the question of precisely comparing the efficiency between the established algorithms and choosing the optimal mathematical tools, e.g., for the removal of redundant constraints.    In addition, we plan to investigate how the approach can be applied to systems with inputs, and how it can be utilised for systems where the maximal admissible invariant set is a polytope, but one would like to approximate it with much fewer constraints.

%
%
%

\appendix

We describe computationally efficient procedures that can be used to realize intermediate steps in Algorithms~$1$-$3$.

\section{\small{ \textbf{Minimal description of polyhedral sets}}} \label{appendix1}

Finding the minimal representation of a polyhedral set $\cS\subset\R^N$ is a  well-studied problem, see e.g. \cite{fukuda:2000}, \cite{Ziegler07}, and it is generally accepted that it can be solved efficiently for relatively low dimensions $N$.  It is worth noting that there are methods in which a set of redundant inequalities is removed at each step rather than a single inequality, see e.g., convex hull algorithms \cite{Barber:96} which are directly applicable by the duality of the problems.

 In what follows, we present a simple way to remove a redundant hyperplane in the description of $\cS$ by solving a linear program.
To this end, consider the set $\cS=\{y\in\R^N: g_i^\top y\leq 1, i=1,...,p  \}$, $p\geq 2$. Then, $\cS=\{ y\in\R^N: g_i^\top y\leq 1, i=1,...,p, i\neq j \}$ for some $j\in [1,p]$ if and only if the optimal cost of the linear program
\begin{equation*}
\max_{x} g_j^\top x
\end{equation*}
subject to
\begin{align*}
g_i^\top x\leq 1, \quad \forall i\in[1,p]\setminus \{j\},
\end{align*}
satisfies $g_j^\top x^\star<1$.

\section{\small{ \textbf{Minimal description of semi-algebraic sets}}} \label{appendix2}
Finding the minimal representation of a semi-algebraic set is a much more difficult problem when the polynomials defining the set are not linear.
Deciding for redundancy of a polynomial inequality in the description of a semi-algebraic set can be performed using the Tarski-Seidenberg elimination theorem \cite{Tarski:48}, \cite{Seidenberg:54}.
This implies that the redundancy removal problem is decidable. However, despite its generality, the drawback of the corresponding algorithmic method is its computational complexity, which increases at least exponentially with the number of unknowns.

In what follows we propose a way to remove a redundant polynomial inequality by transforming the problem in a series of semidefinite programs.
It is worth stating that this approach poses sufficient conditions for checking redundancy, however in a computationally efficient manner, see e.g., \cite{Parrilo:00}.
To this end, let $\cS\subset\R^n$,
\begin{equation} \label{eq_app1}
\cS=\projection{\cY_i\cap\cV}=\{x\in\R^n: g_i^\top x^{[\cL]}\leq 1, i=1,...,p \}.
\end{equation}
The next result is an application of Putinar's theorem \cite{Putinar:93}, \cite[Theorem 3.138]{Parrilo:12}.
\begin{proposition} \label{proposition5}
Consider the set $\cS\subset\R^n$ \eqref{eq_app1}. Then, there exists an integer $j\in[1,p]$ such that
\begin{equation} \label{eq_app2}
\cS=\{x\in\R^n: g_i^{\top}x^{[\cL]}\leq 1, i=1,...,p, i\neq j    \}
\end{equation}
if and only if there exist polynomials $s_0(x)$, $s_i(x)$, $i\in [1,p]\setminus \{j\}$, such that
\begin{equation}
1-g_j^\top x^{[\cL]}=s_0^2(x) + \sum_{i=1,i\neq j}^{p} s_i^2(x)(1-c_i(x)).
\end{equation}
\end{proposition}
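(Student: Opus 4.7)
The plan is to recast the redundancy of the $j$-th inequality as a nonnegativity statement on a basic closed semi-algebraic set and then invoke Putinar's Positivstellensatz, as cited in the excerpt.

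First, I would observe the equivalence: the description \eqref{eq_app2} holds if and only if the polynomial $1-g_j^\top x^{[\cL]}=1-c_j(x)$ is nonnegative on the set
\begin{equation*}
\cS_j:=\{x\in\R^n : c_i(x)\leq 1,\ i\in[1,p]\setminus\{j\}\}.
\end{equation*}
Indeed, if $1-c_j\geq 0$ on $\cS_j$, then $\cS_j\subseteq\cS$, and the reverse inclusion is automatic, so $\cS_j=\cS$; conversely, $\cS_j=\cS$ forces $c_j(x)\leq 1$ on $\cS_j$.

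For the sufficiency direction (SOS certificate $\Rightarrow$ redundancy), I would proceed by direct substitution. If the identity of the statement holds and $x\in\cS_j$, then $1-c_i(x)\geq 0$ for every $i\neq j$ and $s_0^2(x), s_i^2(x)\geq 0$ by construction, so the right-hand side is nonnegative. Hence $c_j(x)\leq 1$ on $\cS_j$, which gives $\cS_j\subseteq\cS$ and therefore $\cS_j=\cS$, i.e., \eqref{eq_app2}.

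For the necessity direction (redundancy $\Rightarrow$ SOS certificate), I would apply Putinar's theorem to the quadratic module $M$ generated by $\{1-c_i(x)\}_{i\neq j}$ on $\cS_j$. Under the redundancy hypothesis, $\cS_j=\cS\subseteq\cX$ is compact by Assumption~\ref{assumption2}, and $1-c_j(x)\geq 0$ on $\cS_j$. By Putinar's Positivstellensatz, provided the module is Archimedean, there exist SOS polynomials $\sigma_0,\sigma_i$ with
\begin{equation*}
1-c_j(x)=\sigma_0(x)+\sum_{i\neq j}\sigma_i(x)\bigl(1-c_i(x)\bigr),
\end{equation*}
and writing each SOS polynomial as a (sum of) square(s) $s_i^2$ yields the claimed identity (with the mild abuse of notation where $s_i^2$ stands for an SOS polynomial).

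The main obstacle is twofold. First, Putinar's theorem requires the Archimedean condition on $M$, which is not automatic from compactness of $\cS_j$; the standard workaround is to append a redundant ball constraint $R^2-\|x\|^2\geq 0$ (which is redundant precisely because $\cS$ is bounded under Assumption~\ref{assumption2}) so as to render the module Archimedean without altering the set. Second, Putinar's theorem in its cleanest form asks for \emph{strict} positivity, whereas here we only have $1-c_j\geq 0$ on $\cS_j$; this can be handled by the usual $\varepsilon$-perturbation argument ($1-c_j+\varepsilon>0$, pass to the limit in the degree-bounded cone), or by working with the closure of $M$ in a suitable topology. Once these two points are addressed, the identity in the proposition follows and the proof is complete.
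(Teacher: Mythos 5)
The paper gives no proof of this proposition at all --- it is stated bare, prefaced only by ``the next result is an application of Putinar's theorem'' --- so your proposal supplies exactly the argument the authors intend. Your reduction of redundancy to nonnegativity of $1-c_j$ on $\cS_j$ is correct, your direct-substitution proof of the ``if'' direction is complete, and your fix for the Archimedean condition (appending a redundant ball constraint, which is harmless since $\cS\subseteq\cX$ is bounded by Assumption~\ref{assumption2}) is the standard and correct one.

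The genuine gap is in the ``only if'' direction, and you have named it honestly but not closed it. Putinar's Positivstellensatz yields a certificate only for polynomials \emph{strictly} positive on the set, whereas redundancy of the $j$-th constraint guarantees only $1-c_j\geq 0$ on $\cS_j$: a redundant constraint can still be active, i.e.\ touch the boundary of $\cS$ without cutting it, so strict positivity cannot be assumed. Your $\varepsilon$-perturbation produces, for each $\varepsilon>0$, a certificate for $1+\varepsilon-c_j$, but ``passing to the limit in the degree-bounded cone'' does not go through: there is no a priori bound on the degrees of the multipliers as $\varepsilon\to 0$, and quadratic modules are in general not closed, so the limiting certificate need not exist. (This is precisely why the paper's own computational procedure \eqref{eq_app3}--\eqref{eq_app4} minimizes $\varepsilon$ and accepts $\varepsilon^\star<1$ as a merely sufficient test.) As stated, the ``only if'' direction therefore does not follow from Putinar's theorem alone; it would hold under strict redundancy ($1-c_j>0$ on $\cS_j$) or under additional hypotheses forcing the nonnegative polynomial into the module. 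This is arguably an overstatement in the paper itself, but your proposal inherits the problem rather than resolving it, so the necessity direction remains unproven.
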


Proposition~\ref{proposition5} provides a necessary and sufficient condition of identifying redundant inequalities $g_j^\top x^{[\cL]}\leq 1$ in the description of the set $\cS$.
However, it is not algorithmically implementable, since the degree of the functions $s_0(x)$, $s_i(x)$, $i\in[1,p]\setminus\{j\}$ can be arbitrarily high.
Nevertheless, by fixing the maximum degree of the polynomials $s_0(x), s_i(x)$, we can formulate the following optimization problem
\begin{equation} \label{eq_app3}
\min_{s_0(x),s_i(x)}\varepsilon
\end{equation}
subject to
\begin{align}
\varepsilon-g_j^\top x^{[\cL]}=s_0^2(x)+ \sum_{i=1,i\neq j}^{p} s_i^2(x)(1-c_i(x)). \label{eq_app4}
\end{align}
The optimization problem \eqref{eq_app3}, \eqref{eq_app4} is equivalent to a semidefinite program, see e.g. \cite{Parrilo:00,PPP:02}.
If the optimal cost is $\varepsilon^\star<1$ for an index $j$, then the set $\cS$ can be described by \eqref{eq_app2}.

\section{\small{ \textbf{Computation of the sets $\cB\subset\R^N$ required for the initialization of Algorithms~$2$ and $3$. }}} \label{appendix3}
Under Assumption~\ref{assumption2}, we can always find polytopic sets $\cB\subset\R^N$ satisfying the  properties in Theorems~\ref{theorem2} and \ref{theorem3}.
In this section we propose one such possible construction.
To this end, we first compute a set $\cB_1\subset\R^n$ such that
$\cB_1 :=\{ x\in\R^n: x_{\min}\leq x\leq x_{\max}  \}$.
Next, we define $\cB_{l_j}$, $l_j\in\cL$,
\begin{equation*}
\cB_{l_j}:=\{ y\in\R^{N_j}: y_{\min}^{l_j}\leq y\leq y_{\max}^{l_j}  \},
\end{equation*}
where
\begin{align*}
y_{\max,k}^{l_j} & := \max\left\{x^\alpha_k \sqrt{\alpha_k!}:  x_i\in \{x_{\min,i},x_{\max,i}\}, i\in[1,n]  \right\}, \\
y_{\min,k}^{l_j} & := \min\left\{x^\alpha_k \sqrt{\alpha_k!}:  x_i\in \{x_{\min,i},x_{\max,i}\}, i\in[1,n]  \right\},
\end{align*}
$k\in [1,N_j]$, $N_j={n+l_j-1 \choose l_j}$
while  each element $y_k$, $k\in[1,N_j]$, corresponds to the monomial $x^{\alpha}$ of the $l_j$-lift of $x$.
The set
\begin{equation*}
\cB:=\cB_{l_1}\times \cB_{l_2}\times\ldots\times\cB_{l_K}
\end{equation*}
is a polytope, can be used to initialize Algorithm~$2$ since $\cB\supset \cX^{[\cL]}\cap\cV$ and
is described by
\begin{equation} \label{eq_app5}
\cB=\{y\in\R^N: R_{\min}\leq y\leq R_{\max}  \},
\end{equation}
where
\begin{align*}
R_{\min} & =\left[y_{\min}^{l_1 \top}\quad \ldots \quad y_{\min}^{l_K \top}	 \right]^{\top}, \\
R_{\max} &=\left[y_{\max}^{l_1 \top}\quad \ldots \quad y_{\max}^{l_K \top}	 \right]^{\top}.
\end{align*}
To recover a set $\cB\subset\R^N$ which can be used for initialization in Algorithm~$3$, it is sufficient to replace $R_{\min}$ in \eqref{eq_app5} with
\begin{equation*}
\hat{R}_{\min,i}=\min\{ -\delta, R_{\min,i}   \},
\end{equation*}
for all $i=1,...,\sum_{l_j\in\cL}{n+l_j-1 \choose l_j}$ and some positive scalar $\delta>0$.
\end{document}